\documentclass[12pt,reqno]{amsart}

\usepackage{amsfonts,amsmath,amssymb,amsthm,mathtools}
\usepackage{mathrsfs}
\usepackage{enumerate}
\usepackage[hidelinks]{hyperref}
\usepackage{microtype}

\allowdisplaybreaks
\numberwithin{equation}{section}

\newcommand{\Nat}{\mathbb{N}}
\newcommand{\Intg}{\mathbb{Z}}

\newcommand{\Ccat}{\mathcal{C}}
\newcommand{\G}{\mathcal{G}}

\newcommand{\Tcal}{\mathcal{T}}

\newcommand{\id}{\mathrm{id}}

\newcommand{\ip}[2]{\langle #1,#2\rangle}

\newcommand{\bm}[1]{\boldsymbol{#1}}
\newcommand{\cstar}[1]{C^*(#1)}

\newcommand{\compacts}{\mathcal{K}}
\newcommand{\adjointables}{\mathcal{L}}
\newcommand{\MCE}{\operatorname{MCE}}
\newcommand{\NT}{\mathcal{NT}}
\newcommand{\NOcal}{\mathcal{NO}}

\theoremstyle{plain}
\newtheorem{theorem}{Theorem}[section]
\newtheorem{lemma}[theorem]{Lemma}
\newtheorem{corollary}[theorem]{Corollary}
\newtheorem{proposition}[theorem]{Proposition}

\theoremstyle{definition}
\newtheorem{definition}[theorem]{Definition}
\newtheorem{example}[theorem]{Example}
\newtheorem{remark}[theorem]{Remark}
\newtheorem{question}[theorem]{Question}

\title[Weak factorization over groupoids]
{Weak factorization and product systems over groupoids}
\author[Bannon]{Jon Bannon}
\address{Department of Mathematics, Siena University, 515 Loudon Road,
Loudonville, NY 12211, USA}
\email{jbannon@siena.edu}
\author[Vdovina]{Alina Vdovina}
\address{Department of Mathematics, City College of New York, CUNY,
160 Convent Avenue, New York, NY 10031, USA}
\email{avdovina@ccny.cuny.edu}

\hypersetup{
  pdftitle={Weak factorization and product systems over groupoids},
  pdfauthor={Jon Bannon and Alina Vdovina},
  pdfsubject={Generalized higher-rank graphs and product systems},
  pdfkeywords={generalized higher-rank graph, product system, groupoid biset,
    weak factorization, Zappa--Sz\'ep product}
}

\subjclass[2020]{Primary 46L08, 46L55; Secondary 18B40, 20M50}
\keywords{generalized higher-rank graph, product system, correspondence,
groupoid biset, weak factorization, Zappa--Sz\'ep product}

\begin{document}

\begin{abstract}
Let $(\Ccat,\ell)$ be a generalized higher-rank graph and let
$\G=\ell^{-1}(\bm 0)$ be its groupoid of invertible arrows.  We show that the
weak factorization property is equivalent to the assertion that multiplication
induces coherent bijections
\[
  \Ccat^{\bm m}*_{\G}\Ccat^{\bm n}
       \longrightarrow \Ccat^{\bm m+\bm n}
\]
of balanced groupoid bisets.  Thus generalized higher-rank graphs with fixed
invertible groupoid are the same data as normalized $\Nat^k$-product systems
of $\G$--$\G$-bisets.  If $\Ccat$ is countable and left cancellative, these
bisets linearize canonically to a product system of
$C^*(\G)$-correspondences.  The multiplication unitaries are induced by
category multiplication.  Finite alignment of $\Ccat$ implies compact
alignment of the correspondence system, and its Nica--Toeplitz algebra is
canonically isomorphic to Spielberg's full category algebra
$C^*(\mathscr G_2(\Ccat))$.
Under row-finiteness modulo $\G$ and no sources, Fowler's Cuntz--Pimsner
quotient is the boundary reduction of this groupoid; with injective left
actions, the same holds for the Cuntz--Nica--Pimsner algebra.  We characterize the
$R$-condition as the existence of a strict multiplicative splitting by
right-orbit representatives and identify the resulting correspondence system
with the system attached to a higher-rank graph/groupoid Zappa--Sz\'ep
product.  A cancellative rank-two example with a nontrivial group-valued
commuting square admits no strict splitting.
\end{abstract}

\maketitle

\section{Introduction}
\label{sec:introduction}

The factorization property of a higher-rank graph replaces the unique
decomposition of a word at a prescribed length.  Lawson and Vdovina introduced
generalized higher-rank graphs by allowing the two pieces of a factorization to
be modified by an invertible arrow at their common vertex
\cite[Section~3]{LawsonVdovina2022}.  More precisely, if
$\ell(c)=\bm m+\bm n$, then $c=ab$ with $\ell(a)=\bm m$ and
$\ell(b)=\bm n$, and any other such factorization has the form
\[
             c=(ag)(g^{-1}b)
\]
for an invertible arrow $g$.  The connecting arrow need not be unique.

The quotient implicit in this statement is the balanced product over the
groupoid of invertible arrows.  Put
\[
       \G=\Ccat^\times=\ell^{-1}(\bm 0),
       \qquad \Ccat^{\bm n}=\ell^{-1}(\bm n).
\]
Each homogeneous fiber is a $\G$--$\G$-biset under multiplication.  Theorem
\ref{thm:wfp-biset-system} proves that the weak factorization property is
equivalent to bijectivity of
\begin{equation}
 \label{eq:intro-balanced-multiplication}
 \Ccat^{\bm m}*_{\G}\Ccat^{\bm n}
       \longrightarrow \Ccat^{\bm m+\bm n},
       \qquad [a,b]\longmapsto ab.
\end{equation}
Associativity in $\Ccat$ is precisely the coherence of these bijections.
Conversely, a normalized $\Nat^k$-product system of $\G$--$\G$-bisets
reconstructs the generalized higher-rank graph.  For fixed $\G$, the two
constructions give an isomorphism of categories.

The biset statement has a direct operator-algebraic linearization.  Suppose
that $\Ccat$ is countable and left cancellative, and set $A=\cstar{\G}$.  Left
cancellation makes the right action of $\G$ on every fiber free.  For
$c,d\in\Ccat^{\bm n}$, let $\xi_c$ denote the corresponding elementary vector
and set
\begin{equation}
 \label{eq:intro-inner-product}
 \ip{\xi_c}{\xi_d}
 =
 \begin{cases}
   u_g,&d=cg\text{ for }g\in\G,\\
   0,&d\notin c\G,
 \end{cases}
 \qquad
 u_h\xi_c=
 \begin{cases}
   \xi_{hc},&s(h)=r(c),\\
   0,&\text{otherwise}.
 \end{cases}
\end{equation}
Here $u_g$ is the canonical partial isometry in $A$.  The completion is an
$A$-correspondence $X_{\bm n}$.  Theorem~\ref{thm:linearization} shows that
category multiplication extends to unitaries
\begin{equation}
 \label{eq:intro-correspondence-multiplication}
 X_{\bm m}\otimes_A X_{\bm n}\longrightarrow X_{\bm m+\bm n},
 \qquad \xi_a\otimes\xi_b\longmapsto \xi_{ab},
\end{equation}
where a noncomposable elementary tensor is zero.  The inner-product calculation
uses weak factorization in the converse direction.  No multiplicative
transversal enters the construction.  If every
nonzero homogeneous fiber has finitely many right $\G$-orbits, the
correspondences are finitely generated projective and the resulting product
system is compactly aligned.

For a self-similar group action on a finite alphabet, $X_1$ is the
full-$C^*(G)$ completion of Nekrashevych's algebraic self-similarity bimodule,
with left action given by his linear recursion
\cite[pp.~227--228]{Nekrashevych2004}.  For several degrees, the extra data
are factorization unitaries between the degree-one correspondences.  When the generalized higher-rank
graph satisfies the $R$-condition, Lawson--Vdovina obtain a higher-rank graph
$\Lambda$, a size-preserving Zappa--Sz\'ep action, and a unique normal form
$c=\lambda g$ \cite[Theorems~4.2 and~4.8]{LawsonVdovina2022}.  We show in
Theorem~\ref{thm:r-condition-splitting} that this is exactly a strict
multiplicative splitting of the biset product system.  Under this splitting,
our correspondence system has fibers
\[
 X(\Lambda^{\bm n})\otimes_{C_0(\Lambda^0)}\cstar{\G}
\]
and the Zappa--Sz\'ep multiplication formula.  For a finite $\Lambda$ and a
faithful self-similar action, this is the compactly aligned product system of
Afsar, Brownlowe, Ramagge, and Whittaker
\cite[Proposition~5.1]{AfsarBrownloweRamaggeWhittaker2026}.  Faithfulness is
part of their definition of a self-similar groupoid action; it is not a
consequence of the $R$-condition.  Our construction does not require it.

Strict splittings need not exist, even for cancellative categories.  In
Section~\ref{sec:twisted-example} we give a generalized
rank-two graph on $\Nat^2\times\Intg$ whose degree-zero group is $\Intg$ and
whose multiplication contains the cocycle $-(\text{second coordinate})
(\text{first coordinate})$.  Every atomic transversal violates the
$R$-condition.  Its correspondence fibers nevertheless all have rank one over
$C^*(\Intg)\cong C(\mathbb T)$, and the two colors commute through the
canonical unitary of $C^*(\Intg)$.

The correspondence system also recovers the operator algebras of the
underlying category.  If $\Ccat$ is finitely aligned, Proposition
\ref{prop:finite-alignment-compacts} proves compact alignment by an explicit
minimal-common-extension formula for products of rank-one operators.  Theorem
\ref{thm:nica-spielberg} then identifies its Nica--Toeplitz algebra with
Spielberg's full category algebra
$C^*(\mathscr G_2(\Ccat))$ \cite[Definition~5.13 and
Theorem~9.7]{Spielberg2020}.  Under row-finiteness modulo $\G$ and no sources,
Theorem~\ref{thm:fowler-spielberg-boundary} identifies Fowler's
Cuntz--Pimsner quotient with the boundary reduction
$C^*(\mathscr G_2(\Ccat)|_{\partial\Ccat})$.  If the left actions are also
injective, Sims--Yeend covariance gives the same identification for the
Cuntz--Nica--Pimsner algebra
\cite[Proposition~5.1 and Corollary~5.2]{SimsYeend2010}.  The injectivity
hypothesis is needed for this application of their result.

Replacing $C^*(\G)$ by a coefficient quotient raises a compatibility question.
Nekrashevych's minimal self-similar completion is generally a quotient of the full group algebra
\cite[Theorem~5.2]{Nekrashevych2004}; a higher-rank analogue must ensure
compatibility in every degree and with every multiplication unitary.  We
formulate this issue in Section~\ref{sec:further-questions}.
 \section{Preliminaries}
\label{sec:preliminaries}

We write $\Nat=\{0,1,2,\ldots\}$ and use bold letters for elements of
$\Nat^k$.  The standard generators are $\bm e_1,\ldots,\bm e_k$.  All
categories and groupoids are small.  In the operator-algebraic sections they
are also countable and discrete.

\subsection{Categories with a size functor}

For an arrow $c$ in a category $\Ccat$, its range and source are denoted by
$r(c)$ and $s(c)$, respectively.  Thus $cd$ is defined exactly when
$s(c)=r(d)$.  We identify objects with identity arrows and write $\Ccat^0$ for
their set.  The wide subgroupoid of invertible arrows is $\Ccat^\times$.

\begin{definition}
\label{def:generalized-graph}
A \emph{size functor} is a functor $\ell\colon\Ccat\to\Nat^k$ such that
$\ell^{-1}(\bm 0)=\Ccat^\times$.  It has the \emph{weak factorization
property} if, whenever $\ell(c)=\bm m+\bm n$, there are arrows $a,b$ such
that
\[
 c=ab,\qquad \ell(a)=\bm m,\qquad \ell(b)=\bm n,
\]
and, for every other such factorization $c=a'b'$, there is a
$g\in\Ccat^\times$ for which
\[
                a'=ag,\qquad b'=g^{-1}b.
\]
A category equipped with such a functor is a \emph{generalized higher-rank
$k$-graph}.
\end{definition}

This is Lawson--Vdovina's definition
\cite[Section~3]{LawsonVdovina2022}, with $s$ in place of their domain map
$d$.  The definition does not assert that the connecting invertible arrow is
unique.  An ordinary $k$-graph is the special case in which the only
invertible arrows are identities.

A category is \emph{left cancellative} if $ca=cb$ implies $a=b$ whenever the
products are defined.  For a generalized higher-rank graph, this hypothesis
has an equivalent formulation on the homogeneous bisets.

\begin{lemma}
\label{lem:right-action-free}
Let $(\Ccat,\ell)$ be a generalized higher-rank graph and
$\G=\Ccat^\times$.  Then $\Ccat$ is left cancellative if and only if right
multiplication makes each set $\Ccat^{\bm n}=\ell^{-1}(\bm n)$ a free right
$\G$-set.
\end{lemma}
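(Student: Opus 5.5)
Both directions reduce to the identity $cg=c$ or its consequences, using only the weak factorization property and the fact that $\ell^{-1}(\bm 0)=\G$. First we record that right multiplication does define a right $\G$-action on $\Ccat^{\bm n}$. For $c\in\Ccat^{\bm n}$ and $g\in\G$ with $r(g)=s(c)$, functoriality gives $\ell(cg)=\ell(c)+\bm 0=\bm n$, and associativity gives the action axioms. So the lemma is only about freeness: $cg=c$ should force $g=s(c)$.

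\emph{Left cancellative implies free.} Suppose $cg=c=c\,s(c)$. Left cancellation applied to $c$ yields $g=s(c)$, which is an identity. This direction is immediate.

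\emph{Free implies left cancellative.} Suppose $ca=cb$ with $s(c)=r(a)=r(b)$. Put $\bm m=\ell(c)$ and $x=ca=cb$. Functoriality gives
\[
 \ell(a)=\ell(x)-\bm m=\ell(b)=:\bm n,
\]
where subtraction is taken in $\Intg^k$ and is valid since $\ell(x)=\bm m+\ell(a)$. Then $x=ca=cb$ are two factorizations of $x$ of shape $(\bm m,\bm n)$. By the weak factorization property, both are related to a reference factorization $x=a_0b_0$: there are $g,h\in\G$ with
\[
 c=a_0g,\quad a=g^{-1}b_0,\quad c=a_0h,\quad b=h^{-1}b_0.
\]
Hence $a_0g=a_0h$, so $a_0(gh^{-1})=a_0$. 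Here $gh^{-1}$ is defined because $s(g)=r(a)=s(c)=s(h)$. Freeness of the right $\G$-action on $\Ccat^{\bm m}$ then forces $gh^{-1}=s(a_0)$, so $g=h$. Therefore
\[
 a=g^{-1}b_0=h^{-1}b_0=b.
\]

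The main point of care is this last direction. Freeness of the action is only cancellation against invertibles, and it must be upgraded to cancellation against arbitrary arrows. The weak factorization property does this upgrade, because the invertible arrow connecting two factorizations of the same arrow is determined once freeness holds. One should also check that the domains match: $r(g)=s(a_0)=r(h)$, so $gh^{-1}$ is an element of the isotropy at $s(a_0)$, and its action on $a_0$ makes sense. A degenerate case is $\bm m=\bm 0$, where $c$ is invertible. There cancellation is trivial, by multiplying on the left by $c^{-1}$, and the general argument also covers it.
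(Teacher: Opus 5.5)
Your proof is correct and follows the paper's argument. Left cancellation gives freeness directly, and conversely weak factorization applied to the two factorizations $ca=cb$, combined with freeness, forces the connecting invertible arrow to be a unit; the only difference is that you route explicitly through the reference factorization $x=a_0b_0$ and apply freeness at $a_0$, whereas the paper applies freeness at $c$ and leaves implicit the step of composing the two connecting arrows.
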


\begin{proof}
If $cg=ch$, left cancellation gives $g=h$, so every right action is free.
Conversely, suppose that
all the right actions are free and that $ca=cb$.  Additivity of the size
functor and cancellation in $\Nat^k$ give $\ell(a)=\ell(b)$.  Apply weak
factorization to the two factorizations $c\cdot a$ and $c\cdot b$ of the same
arrow.  There is a $g\in\G$ such that
\[
                         c=cg,\qquad b=g^{-1}a.
\]
Freeness of the right action on the fiber containing $c$ forces
$g=s(c)$, and hence $a=b$.
\end{proof}

\subsection{Groupoid bisets and balanced products}

Let $\G$ be a groupoid.  A $\G$--$\G$-biset $Z$ has anchor maps
$r_Z,s_Z\colon Z\to\G^0$, a left action defined when $s(g)=r_Z(z)$, and a
right action defined when $s_Z(z)=r(g)$.  The anchor maps transform as they do
under multiplication of arrows, and the two actions commute.  We suppress
the subscripts on the anchors.

If $Z$ and $W$ are $\G$--$\G$-bisets, let
\[
 Z\,{}_{s}\!\mathbin{*}_{r}W
   =\{(z,w)\in Z\times W:s(z)=r(w)\}.
\]
There is a partially defined diagonal action
\[
        (z,w)\cdot g=(zg,g^{-1}w).
\]
The \emph{balanced product} $Z*_{\G}W$ is the orbit set for this action; its
elements are written $[z,w]$.  Equivalently, it is the quotient generated by
$(zg,w)\sim(z,gw)$.  It is again a $\G$--$\G$-biset.

\begin{definition}
\label{def:biset-product-system}
A \emph{normalized product system of $\G$--$\G$-bisets over $\Nat^k$}
consists of bisets $Z_{\bm n}$, with $Z_{\bm 0}=\G$, and biset bijections
\[
 \mu_{\bm m,\bm n}\colon
 Z_{\bm m}*_{\G}Z_{\bm n}\longrightarrow Z_{\bm m+\bm n}.
\]
The maps with a zero index are the left and right actions of $\G$, and the
maps are associative in the sense that
\begin{equation}
\label{eq:biset-associativity}
 \mu_{\bm m+\bm n,\bm p}
   \big[\mu_{\bm m,\bm n}[a,b],c\big]
 =
 \mu_{\bm m,\bm n+\bm p}
   \big[a,\mu_{\bm n,\bm p}[b,c]\big]
\end{equation}
for every composable triple.
\end{definition}

\subsection{\texorpdfstring{$C^*$}{C-star}-correspondences}

Let $A$ be a $C^*$-algebra.  An $A$-correspondence is a right Hilbert
$A$-module $X$ together with a nondegenerate $*$-homomorphism
$A\to\adjointables(X)$.  A product system over $\Nat^k$ consists of
$A$-correspondences $X_{\bm n}$, with $X_{\bm 0}=A$, and associative unitary
correspondence maps
\[
 U_{\bm m,\bm n}\colon
 X_{\bm m}\otimes_A X_{\bm n}\longrightarrow X_{\bm m+\bm n}
\]
whose zero-degree maps are the module actions.  We use the standard order on
$\Nat^k$ and write $\bm m\vee\bm n$ for the coordinatewise maximum.  The
system is \emph{compactly aligned} if the product of the canonical
amplifications to $X_{\bm m\vee\bm n}$ of any
$S\in\compacts(X_{\bm m})$ and $T\in\compacts(X_{\bm n})$ belongs to
$\compacts(X_{\bm m\vee\bm n})$; see
\cite[Definition~5.7]{Fowler2002}.
 \section{Weak factorization as balanced factorization}
\label{sec:biset-product-systems}

Let $(\Ccat,\ell)$ be a category with a size functor, and put
$\G=\Ccat^\times$.  For $\bm n\in\Nat^k$, set
\[
                    Z_{\bm n}=\Ccat^{\bm n}=\ell^{-1}(\bm n).
\]
Left and right multiplication by invertible arrows make $Z_{\bm n}$ a
$\G$--$\G$-biset with anchors $r$ and $s$.  These actions need not be free.

\begin{proposition}
\label{prop:multiplication-balanced}
For $\bm m,\bm n\in\Nat^k$, multiplication in $\Ccat$ induces a biset map
\begin{equation}
\label{eq:mu-mn}
 \mu_{\bm m,\bm n}\colon
 Z_{\bm m}*_{\G}Z_{\bm n}\longrightarrow Z_{\bm m+\bm n},
 \qquad \mu_{\bm m,\bm n}[a,b]=ab.
\end{equation}
The size functor has the weak factorization property if and only if every map
in \eqref{eq:mu-mn} is bijective.
\end{proposition}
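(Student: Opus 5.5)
First I check that $\mu_{\bm m,\bm n}$ is well defined and is a map of bisets.

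\emph{Landing in the right fiber.} A pair $(a,b)\in Z_{\bm m}\,{}_{s}\!\mathbin{*}_{r}Z_{\bm n}$ is composable. Functoriality gives $\ell(ab)=\bm m+\bm n$, so $ab\in Z_{\bm m+\bm n}$.

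\emph{Independence of representative.} If $(a,b)$ is replaced by $(ag,g^{-1}b)$ with $s(a)=r(g)$, associativity gives $(ag)(g^{-1}b)=a(gg^{-1})b=ab$. So multiplication descends to the orbit set $Z_{\bm m}*_{\G}Z_{\bm n}$.

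\emph{Compatibility with the biset structure.} Since $r(ab)=r(a)$ and $s(ab)=s(b)$, the anchors match. For composable $h,k\in\G$ we have $h[a,b]k=[ha,bk]\mapsto (ha)(bk)=h(ab)k$. Hence $\mu_{\bm m,\bm n}$ is a map of $\G$--$\G$-bisets.

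For the equivalence, I would observe that the two clauses of weak factorization match surjectivity and injectivity of $\mu_{\bm m,\bm n}$.

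\emph{Existence clause and surjectivity.} The existence clause for a fixed splitting $\bm m+\bm n$ says exactly that every $c\in Z_{\bm m+\bm n}$ equals $ab$ with $a\in Z_{\bm m}$ and $b\in Z_{\bm n}$. That is the statement that $\mu_{\bm m,\bm n}$ is surjective.

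\emph{Uniqueness clause and injectivity.} Suppose $\mu[a,b]=\mu[a',b']$. Then $ab=a'b'=c$ are two factorizations of $c$ with $\ell(a)=\ell(a')=\bm m$ and $\ell(b)=\ell(b')=\bm n$. Weak factorization gives $g\in\G$ with $a'=ag$ and $b'=g^{-1}b$. Composability is automatic, since $r(g)=s(a)$ and $s(g)=r(b')$. So $(a',b')=(a,b)\cdot g$ and $[a',b']=[a,b]$, which is injectivity.

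\emph{Converse.} Assume every $\mu_{\bm m,\bm n}$ is bijective. Surjectivity gives a factorization $c=ab$. If $c=a'b'$ is another, injectivity gives $[a',b']=[a,b]$. The balanced product is defined as an orbit set for the diagonal action, not just via the generating relation, so equality of classes means $(a',b')=(ag,g^{-1}b)$ for a single $g\in\G$. That is precisely the uniqueness clause.

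The only point needing care is this last step. Because the diagonal action $(z,w)\cdot g=(zg,g^{-1}w)$ is a groupoid action, the relation $(z,w)\sim(z,w)\cdot g$ is already an equivalence relation: reflexivity comes from units, symmetry from inverses, and transitivity from $(zg)h=z(gh)$. So the equivalence relation generated by $(zg,w)\sim(z,gw)$ involves no longer chains, and one connecting invertible arrow always suffices. The statement makes no claim that this $g$ is unique, so freeness plays no role here.
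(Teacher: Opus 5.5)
Your proof is correct and follows the paper's argument. Associativity gives well-definedness, the existence clause corresponds to surjectivity, the uniqueness clause corresponds to injectivity, and the converse uses that $Z_{\bm m}*_{\G}Z_{\bm n}$ is the orbit set of the diagonal action. The one step the paper spells out and you skip is this: the uniqueness clause in Definition~\ref{def:generalized-graph} is literally stated relative to the factorization provided by the existence clause. So for injectivity you should compare both $(a,b)$ and $(a',b')$ with that factorization and compose the two connecting arrows, which is just the symmetry and transitivity of the orbit relation you already verify in your last paragraph.
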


\begin{proof}
If $(ag,b)$ and $(a,gb)$ are defined, associativity gives
$(ag)b=a(gb)$.  Thus \eqref{eq:mu-mn} is well defined.  It is equivariant for
the exterior left and right actions.

Suppose first that $\ell$ has the weak factorization property.  Existence of a
factorization with degrees $\bm m$ and $\bm n$ proves surjectivity.  If
$ab=a'b'$ for two such factorizations, compare both with a factorization
specified by Definition~\ref{def:generalized-graph}.  Composing the two
resulting invertible middle arrows gives a $g\in\G$ such that
\[
                         a'=ag,\qquad b'=g^{-1}b.
\]
Thus $(a,b)$ and $(a',b')$ lie in the same diagonal $\G$-orbit, so
$[a,b]=[a',b']$.  Hence \eqref{eq:mu-mn} is injective.

Conversely, suppose all the maps \eqref{eq:mu-mn} are bijective.  Their
surjectivity gives the required factorizations.  If $ab=a'b'$ with the same
prescribed degrees, injectivity gives $[a,b]=[a',b']$.  The balanced
equivalence relation is the orbit relation for the diagonal groupoid action,
so there is a single $g\in\G$ such that
$(a',b')=(ag,g^{-1}b)$.  This is the weak factorization property.
\end{proof}

\begin{theorem}
\label{thm:wfp-biset-system}
Let $(\Ccat,\ell)$ be a generalized higher-rank $k$-graph.  Its homogeneous
fibers, together with the maps \eqref{eq:mu-mn}, form a normalized product
system of $\G$--$\G$-bisets over $\Nat^k$.

Conversely, let $Z=\{Z_{\bm n}:\bm n\in\Nat^k\}$ be a normalized product
system of $\G$--$\G$-bisets.  There is a generalized higher-rank graph
\[
                       \Ccat_Z=\bigsqcup_{\bm n\in\Nat^k}Z_{\bm n}
\]
with size functor $\ell(z)=\bm n$ for $z\in Z_{\bm n}$ and multiplication
\begin{equation}
\label{eq:reconstructed-multiplication}
                         zw=\mu_{\bm m,\bm n}[z,w]
\end{equation}
for composable $z\in Z_{\bm m}$ and $w\in Z_{\bm n}$.  Its invertible
subgroupoid is $Z_{\bm 0}=\G$.
\end{theorem}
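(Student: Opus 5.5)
For the forward direction, Proposition~\ref{prop:multiplication-balanced} already shows that each $\mu_{\bm m,\bm n}$ is a well-defined biset bijection, since $\ell$ has weak factorization. What remains is to verify the axioms of Definition~\ref{def:biset-product-system}.
\begin{enumerate}
\item $Z_{\bm 0}=\ell^{-1}(\bm 0)=\Ccat^\times=\G$ by the definition of a size functor.
\item The maps with a zero index are literally left and right multiplication by invertible arrows. Under the canonical identifications $\G*_{\G}Z_{\bm n}\cong Z_{\bm n}$, $[g,z]\mapsto gz$, they are the biset actions.
\item The associativity identity \eqref{eq:biset-associativity} is associativity $(ab)c=a(bc)$ in $\Ccat$.
\end{enumerate}
This direction is routine.

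For the converse, I would define $\Ccat_Z$ as stated. The objects are $\G^0$, identified with the units in $Z_{\bm 0}=\G$. The range and source of $z\in Z_{\bm n}$ are its anchors, and composability means $s(z)=r(w)$. The product $zw=\mu_{\bm m,\bm n}[z,w]$ lies in $Z_{\bm m+\bm n}$, and its anchors are $r(z)$ and $s(w)$ because $\mu$ is a biset map.

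The category axioms are checked in turn.
\begin{enumerate}
\item Associativity of composition is exactly \eqref{eq:biset-associativity}.
\item Identities: $\mu_{\bm 0,\bm n}[r(z),z]=r(z)z=z$, because the zero-index maps are the actions and unit arrows act trivially. The right identity is symmetric.
\item $\ell$ is a functor into $\Nat^k$ by construction.
\end{enumerate}

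Next I would check $\ell^{-1}(\bm 0)=\Ccat_Z^\times$. Elements of $Z_{\bm 0}=\G$ are invertible, and their composition agrees with groupoid multiplication because $\mu_{\bm 0,\bm 0}$ is the action of $\G$ on itself. Conversely, suppose $zw$ is an identity. Then $\ell(z)+\ell(w)=\bm 0$ in $\Nat^k$, which forces $\ell(z)=\bm 0$. So the invertible subgroupoid is exactly $Z_{\bm 0}=\G$.

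Weak factorization then follows from the converse half of Proposition~\ref{prop:multiplication-balanced}. That half uses only the bijectivity of the maps $\mu_{\bm m,\bm n}$, and these coincide with the maps \eqref{eq:mu-mn} of $\Ccat_Z$ by \eqref{eq:reconstructed-multiplication}.

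The main obstacle is the bookkeeping at this last step. The balanced product used in Proposition~\ref{prop:multiplication-balanced} is formed over $\Ccat_Z^\times$ acting by multiplication in $\Ccat_Z$, and this must coincide with the given $\G$-biset structure on each $Z_{\bm n}$. That holds precisely because of the normalization: the zero-index $\mu$'s are the given actions. I would state this identification explicitly before invoking the proposition.
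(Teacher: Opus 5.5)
Your proposal is correct and follows essentially the same route as the paper. The forward direction is Proposition~\ref{prop:multiplication-balanced} plus associativity in $\Ccat$. The converse checks units, associativity via \eqref{eq:biset-associativity}, and the degree argument showing that the invertibles are exactly $Z_{\bm 0}$, then invokes the converse half of Proposition~\ref{prop:multiplication-balanced}. Your explicit observation that normalization makes the multiplication action of $\Ccat_Z^\times$ on each $Z_{\bm n}$ coincide with the given $\G$-biset structure is a step the paper leaves implicit, and it is worth stating.
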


\begin{proof}
For a generalized higher-rank graph, Proposition
\ref{prop:multiplication-balanced} gives the required bijections.  The maps
with a zero index are multiplication by elements of $\G$.  For a composable
triple $a,b,c$, the two sides of \eqref{eq:biset-associativity} are
$(ab)c$ and $a(bc)$, respectively, and hence agree.

Now start with $Z$.  Use the common anchor set $\G^0$ as the object set of
$\Ccat_Z$.  The unit conditions in Definition
\ref{def:biset-product-system} show that the units act as identities in
\eqref{eq:reconstructed-multiplication}.  The coherence relation
\eqref{eq:biset-associativity} proves associativity, so $\Ccat_Z$ is a
category.  The grading is additive by construction.  Every element of
$Z_{\bm 0}=\G$ is invertible.  Conversely, if an element of degree $\bm n$
has an inverse of degree $\bm p$, then $\bm n+\bm p=\bm 0$ in $\Nat^k$;
hence $\bm n=\bm 0$.  Thus the invertible subgroupoid is exactly $\G$.
Proposition~\ref{prop:multiplication-balanced}, applied to the bijections
$\mu_{\bm m,\bm n}$, gives the weak factorization property.
\end{proof}

The preceding constructions also account for morphisms.  Fix $\G$.  A
\emph{$\G$-fixed morphism} of generalized higher-rank graphs is a
degree-preserving functor whose restriction to $\G$ is the identity.  A
morphism of normalized biset product systems is a family of equivariant maps
$\theta_{\bm n}\colon Z_{\bm n}\to W_{\bm n}$ with
$\theta_{\bm 0}=\id_{\G}$ and
\[
 \theta_{\bm m+\bm n}\bigl(\mu^Z_{\bm m,\bm n}[z,w]\bigr)
 =\mu^W_{\bm m,\bm n}
       [\theta_{\bm m}(z),\theta_{\bm n}(w)].
\]

\begin{corollary}
\label{cor:categorical-isomorphism}
For a fixed groupoid $\G$, taking homogeneous fibers and taking their disjoint
union are mutually inverse functors between generalized higher-rank
$k$-graphs with invertible subgroupoid $\G$ and normalized
$\Nat^k$-product systems of $\G$--$\G$-bisets.
\end{corollary}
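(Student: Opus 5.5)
We define the two functors explicitly and check that both composites are identities on objects and on morphisms. Write $F$ for the fiber functor and $D$ for the disjoint-union functor.

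\emph{The fiber functor.} Given a generalized higher-rank graph $(\Ccat,\ell)$ with $\Ccat^\times=\G$, Theorem~\ref{thm:wfp-biset-system} makes $F(\Ccat)=\{\Ccat^{\bm n}\}$ with the maps \eqref{eq:mu-mn} a normalized biset product system. For a $\G$-fixed morphism $\phi\colon\Ccat\to\Ccat'$, set $\theta_{\bm n}=\phi|_{\Ccat^{\bm n}}$. This lands in $\Ccat'^{\bm n}$ because $\phi$ preserves degree. Equivariance and the compatibility with $\mu$ both come from functoriality, for example $\phi(gc)=\phi(g)\phi(c)=g\phi(c)$. We have $\theta_{\bm 0}=\id_\G$ by hypothesis. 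Functoriality of $F$ itself is immediate.

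\emph{The disjoint-union functor.} Given a product system $Z$, Theorem~\ref{thm:wfp-biset-system} produces $D(Z)=\Ccat_Z$, whose invertible subgroupoid is $Z_{\bm 0}=\G$. For a morphism $\theta$, define $D(\theta)=\bigsqcup\theta_{\bm n}$. We check three things:
\begin{enumerate}[(i)]
\item $D(\theta)$ preserves degree by construction.
\item $D(\theta)$ is the identity on $\G$, hence on objects.
\item $D(\theta)$ preserves composability, because equivariant biset maps commute with the anchors. The one point to verify here is that $r(\theta(z))=r(z)$. Writing $z=r(z)z$ through the unit action and applying equivariance gives $\theta(z)=r(z)\theta(z)$, which yields the claim.
\end{enumerate}
Multiplicativity of $D(\theta)$ is exactly the displayed compatibility with $\mu^Z$ and $\mu^W$.

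\emph{The composites.} The composite $F D$ is literally the identity. The fibers of $\Ccat_Z$ are the sets $Z_{\bm n}$, with their actions recovered as multiplication by degree-zero elements, which equals the zero-index $\mu$ maps by normalization. Its multiplication maps are $\mu_{\bm m,\bm n}$ by \eqref{eq:reconstructed-multiplication}, and morphisms are unchanged.

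The composite $D F$ returns $\Ccat$. Its arrow set is $\bigsqcup_{\bm n}\Ccat^{\bm n}=\Ccat$, since $\ell$ is defined everywhere. Its multiplication is the original one. Its object set is $\G^0=\Ccat^0$, because $\G$ is a wide subgroupoid. Morphisms are again unchanged.

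\emph{Main obstacle.} The step needing care is that the object set and identities are recovered on the nose, and not merely up to isomorphism. This is what makes the statement an isomorphism of categories rather than an equivalence. It relies on two facts: $\Ccat^0\subseteq\ell^{-1}(\bm 0)=\G$, and the unit conditions in Definition~\ref{def:biset-product-system} identify the identities of $\Ccat_Z$ with the units of $\G$. Both are already in Theorem~\ref{thm:wfp-biset-system}, so the corollary reduces to the bookkeeping above.
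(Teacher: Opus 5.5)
Your proposal is correct and follows the same route as the paper: restricting a $\G$-fixed degree-preserving functor to the fibers gives a compatible family of equivariant maps, the disjoint union of such a family is a functor by \eqref{eq:reconstructed-multiplication}, and both composites are identities on objects and arrows. You supply more of the bookkeeping than the paper does, for example the anchor check $r(\theta(z))=r(z)$ that $D(\theta)$ needs in order to preserve composability.
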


\begin{proof}
A $\G$-fixed degree-preserving functor restricts to equivariant maps on the
fibers, and functoriality is precisely compatibility with the maps $\mu$.
Conversely, the disjoint union of a compatible family $\theta_{\bm n}$ is a
functor by \eqref{eq:reconstructed-multiplication}.  On objects and arrows,
both composite constructions are identities.
\end{proof}

\begin{remark}
The statement of Corollary~\ref{cor:categorical-isomorphism} fixes the
degree-zero groupoid in order to keep the morphisms elementary.  Allowing a
functor between degree-zero groupoids gives the corresponding fibred version,
with equivariance understood relative to that functor.
\end{remark}
 \section{Linearization over the full groupoid algebra}
\label{sec:linearization}

Throughout this section $(\Ccat,\ell)$ is a countable, left-cancellative
generalized higher-rank $k$-graph, $\G=\Ccat^\times$, and
$A=\cstar{\G}$.  Write $u_g$ for the canonical partial isometry associated to
$g\in\G$ and $p_v=u_v$ for $v\in\G^0$.  Thus
\[
 u_g^*=u_{g^{-1}},\qquad
 u_g u_h=
 \begin{cases}
   u_{gh},&s(g)=r(h),\\
   0,&\text{otherwise}.
 \end{cases}
\]
The algebraic span $C_c(\G)=\operatorname{span}\{u_g:g\in\G\}$ is a dense
$*$-subalgebra of $A$.

\subsection{The homogeneous correspondences}

For $\bm n\in\Nat^k$, let $X_{\bm n}^{\mathrm{alg}}$ be the complex span of
symbols $\xi_c$ indexed by $c\in\Ccat^{\bm n}$.  On generators of
$C_c(\G)$ define
\begin{equation}
\label{eq:right-module-action}
 \xi_c u_g=
 \begin{cases}
   \xi_{cg},&s(c)=r(g),\\
   0,&\text{otherwise},
 \end{cases}
\end{equation}
and
\begin{equation}
\label{eq:intrinsic-inner-product}
 \ip{\xi_c}{\xi_d}_A=
 \begin{cases}
   u_g,&d=cg\text{ for some }g\in\G,\\
   0,&d\notin c\G.
 \end{cases}
\end{equation}
The $g$ in \eqref{eq:intrinsic-inner-product} is unique by
Lemma~\ref{lem:right-action-free}.  Extend the action linearly, and extend the
sesquilinear form linearly in the second variable and conjugate linearly in
the first.

\begin{proposition}
\label{prop:coordinate-module}
Choose a set $T_{\bm n}\subseteq\Ccat^{\bm n}$ containing one representative
of every right $\G$-orbit.  The preceding formulas make
$X_{\bm n}^{\mathrm{alg}}$ a pre-Hilbert right $C_c(\G)$-module.  Its right
action extends to $A$ on the Hilbert-module completion $X_{\bm n}$, and there
is a unitary right-module identification
\begin{equation}
\label{eq:coordinate-module}
 X_{\bm n}\cong
 \bigoplus_{t\in T_{\bm n}}p_{s(t)}A,
 \qquad
 \xi_{tg}\longmapsto e_tu_g,
\end{equation}
where $e_t$ is the standard vector in the $t$-summand.  In particular,
$X_{\bm 0}\cong A$ by $\xi_g\mapsto u_g$.
\end{proposition}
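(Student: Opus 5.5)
The plan is to verify the pre-Hilbert axioms by transporting everything to the coordinate model $\bigoplus_{t\in T_{\bm n}}p_{s(t)}C_c(\G)$, where positivity and the module identities are automatic. By Lemma~\ref{lem:right-action-free}, the right $\G$-action on $\Ccat^{\bm n}$ is free. Hence every $c\in\Ccat^{\bm n}$ can be written uniquely as $c=tg$ with $t\in T_{\bm n}$ and $g\in\G$ satisfying $r(g)=s(t)$. The map
\[
 \Phi\colon X_{\bm n}^{\mathrm{alg}}\to\bigoplus_{t\in T_{\bm n}}^{\mathrm{alg}}p_{s(t)}C_c(\G),\qquad \xi_{tg}\mapsto e_tu_g,
\]
is therefore a linear bijection. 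Indeed, the elements $u_g$ with $r(g)=s(t)$ form a basis of $p_{s(t)}C_c(\G)$, and the elements $\xi_{tg}$ form a basis of $X_{\bm n}^{\mathrm{alg}}$.

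I would then check, on generators, that $\Phi$ intertwines the right action and the inner products. For the action, $\xi_{tg}u_h$ equals $\xi_{tgh}$ when $s(g)=r(h)$ and $0$ otherwise. This matches $e_tu_gu_h$, using $s(tg)=s(g)$. For the inner product, take $c=tg$ and $d=t'g'$. We have $d\in c\G$ exactly when $t=t'$, because orbit representatives are unique. In that case $d=c(g^{-1}g')$, with the connecting element unique by freeness, so
\[
\ip{\xi_c}{\xi_d}_A=u_{g^{-1}g'}=u_g^*u_{g'}=\ip{e_tu_g}{e_tu_{g'}}.
\]
When $t\neq t'$ both sides vanish, since distinct summands are orthogonal. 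Because $\Phi$ is a bijection preserving the $C_c(\G)$-valued form and the action, all pre-Hilbert module axioms pull back from the model. These are sesquilinearity, $\ip{x}{ya}=\ip{x}{y}a$, $\ip{x}{y}^*=\ip{y}{x}$, positivity, and definiteness.

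The main point to treat carefully is positivity, because the model is dense only in a completion. In $A$ we have $\ip{x}{x}=\sum_t a_t^*a_t\ge0$ for $x=\sum_t e_ta_t$, which settles positivity. Definiteness does not matter for a completion, but it holds here anyway because $C_c(\G)$ embeds in $\cstar{\G}$. It follows that the seminorm $\|x\|=\|\ip{x}{x}\|^{1/2}$ on $X_{\bm n}^{\mathrm{alg}}$ coincides, via $\Phi$, with the norm of the Hilbert $A$-module $\bigoplus_t p_{s(t)}A$, which is the $\ell^2$-type direct sum. The algebraic direct sum of the spaces $p_{s(t)}C_c(\G)$ is dense in that module, since $p_vC_c(\G)$ is dense in $p_vA$ and finite sums are dense in the direct sum.

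Consequently $\Phi$ extends to an isometric surjection from the completion $X_{\bm n}$ onto $\bigoplus_t p_{s(t)}A$. Since it preserves inner products, it is a unitary of Hilbert modules. The right action of $C_c(\G)$ is bounded, by $\|xa\|\le\|x\|\|a\|$, which follows from the model or from the standard inequality $\ip{xa}{xa}\le\|a\|^2\ip{x}{x}$. So the action extends to $A$ and agrees with the transported action, which gives \eqref{eq:coordinate-module}.

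For $\bm n=\bm 0$, the set $\Ccat^{\bm 0}=\G$ is partitioned into right orbits $v\G$ for $v\in\G^0$. I would choose $T_{\bm 0}=\G^0$, so that $X_{\bm 0}\cong\bigoplus_{v\in\G^0}p_vA$ with $\xi_g\mapsto e_{r(g)}u_g$. Finally, I would identify $\bigoplus_v p_vA$ with $A$ via $e_va\mapsto p_va$. This is an inner-product-preserving map because the $p_v$ are orthogonal projections, with $\ip{p_va}{p_wb}=\delta_{vw}a^*p_vb$. It has dense range, since the sum of the $p_v$ converges strictly to $1$ and $C_c(\G)=\sum_vp_vC_c(\G)$. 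Composing the two gives $\xi_g\mapsto u_g$.
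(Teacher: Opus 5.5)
Your proposal is correct and follows the paper's own argument: freeness gives the unique decomposition $c=tg$, hence a linear bijection onto $\bigoplus^{\mathrm{alg}}_{t}p_{s(t)}C_c(\G)$ that carries the action and the form to the standard ones, completion gives the unitary, and for $\bm n=\bm 0$ one takes $T_{\bm 0}=\G^0$ and identifies $\bigoplus_v p_vA$ isometrically with $A$. One harmless slip: the operator inequality $\ip{xa}{xa}\le\|a\|^2\ip{x}{x}$ is false in general (the correct bound is $\ip{xa}{xa}=a^*\ip{x}{x}a\le\|x\|^2a^*a$), but your norm estimate $\|xa\|\le\|x\|\|a\|$ still holds, for instance via the coordinate model as you also indicate.
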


\begin{proof}
Every $c\in\Ccat^{\bm n}$ has an expression $c=tg$ with
$t\in T_{\bm n}$.  The expression is unique because the right action is
free.  Hence
\[
 X_{\bm n}^{\mathrm{alg}}
 \longrightarrow \bigoplus_{t\in T_{\bm n}}^{\mathrm{alg}}
                       p_{s(t)}C_c(\G),
 \qquad \xi_{tg}\longmapsto e_tu_g,
\]
is a linear bijection.  It transports the right $C_c(\G)$-action to ordinary
coordinatewise multiplication.  If
$c=tg$ and $d=th$, then
\[
 \ip{e_tu_g}{e_tu_h}
 =u_g^*p_{s(t)}u_h=u_{g^{-1}h}
 =\ip{\xi_c}{\xi_d}.
\]
Vectors belonging to distinct summands are orthogonal, as are symbols in
distinct right orbits.  Thus the map also transports the sesquilinear form to
the standard positive-definite inner product.  This proves the pre-Hilbert
module assertions.  The algebraic direct sum is dense in
$\bigoplus_{t\in T_{\bm n}}p_{s(t)}A$, where multiplication gives the required
extension of the right action to $A$.  Completion now gives the unitary
\eqref{eq:coordinate-module}.

For $\bm n=\bm 0$, take $T_{\bm 0}=\G^0$.  The orthogonal sum of the modules
$p_vA$ maps isometrically onto the closure of $\sum_vp_vA$, which is $A$.
\end{proof}

For $h\in\G$, define on elementary vectors
\begin{equation}
\label{eq:left-action}
 L_h\xi_c=
 \begin{cases}
   \xi_{hc},&s(h)=r(c),\\
   0,&\text{otherwise}.
 \end{cases}
\end{equation}

\begin{proposition}
\label{prop:left-action}
The operators $L_h$ are adjointable, $L_h^*=L_{h^{-1}}$, and the assignment
$u_h\mapsto L_h$ extends to a nondegenerate $*$-homomorphism
\[
                   \varphi_{\bm n}\colon A\longrightarrow
                   \adjointables(X_{\bm n}).
\]
Consequently every $X_{\bm n}$ is an $A$-correspondence.  The homomorphism
$\varphi_{\bm n}$ need not be injective.
\end{proposition}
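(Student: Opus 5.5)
The plan is to verify the adjoint relation on elementary vectors, show that $L_h$ is bounded and extends to the completion, and then use the universal property of $A=\cstar{\G}$ to build $\varphi_{\bm n}$.

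\emph{Adjoint relation.} For $c,d\in\Ccat^{\bm n}$ we check
\[
\ip{L_h\xi_c}{\xi_d}=\ip{\xi_c}{L_{h^{-1}}\xi_d}.
\]
Assume $s(h)=r(c)$ and $r(h)=r(d)$; otherwise both sides vanish, since $r(hc)=r(h)$ and $r(h^{-1}d)=s(h)$. Then $d=(hc)g$ holds if and only if $h^{-1}d=cg$. By left cancellation (Lemma~\ref{lem:right-action-free}) the element $g$ is unique, so both sides equal $u_g$ or both are $0$.

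\emph{Boundedness.} Next I would check that $L_h$ is right $C_c(\G)$-linear; this is just associativity, $h(cg)=(hc)g$. For boundedness it is cleanest to note that $L_h$ is a partial bijection of $\Ccat^{\bm n}$. It maps $r^{-1}(s(h))$ bijectively onto $r^{-1}(r(h))$ and commutes with the right $\G$-action. Hence it permutes right orbits. In the coordinates of Proposition~\ref{prop:coordinate-module}, $L_h$ sends $e_tu_g$ to $e_{t'}u_{k g}$, where $ht=t'k$. The map $e_t\mapsto e_{t'}u_k$ is a partial isometry between the direct summands indexed by orbits in $r^{-1}(s(h))$ and those in $r^{-1}(r(h))$. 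Explicitly, $\ip{L_h x}{L_h x}=\ip{x}{L_{h^{-1}}L_h x}=\ip{x}{L_{s(h)}x}$, and $L_{s(h)}$ is the projection onto the summands with $r(t)=s(h)$. This projection is a coordinate projection, hence contractive. Therefore $\|L_hx\|\le\|x\|$, and $L_h$ extends to an adjointable operator on $X_{\bm n}$ with $L_h^*=L_{h^{-1}}$.

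\emph{The homomorphism $\varphi_{\bm n}$.} The relations $L_gL_h=L_{gh}$ when $s(g)=r(h)$, and $L_gL_h=0$ otherwise, are immediate on generators. So $u_h\mapsto L_h$ defines a $*$-representation of $C_c(\G)$ on $X_{\bm n}$. The main obstacle is extending this to the full algebra $\cstar{\G}$. For a discrete groupoid, $\cstar{\G}$ is the universal $C^*$-algebra generated by partial isometries satisfying these relations, with $\sum p_v$ converging strictly; equivalently it is the completion in the universal norm over all $*$-representations of $C_c(\G)$ on Hilbert spaces. I would realize $\adjointables(X_{\bm n})$ faithfully on a Hilbert space, for instance by inducing through a faithful representation of $A$. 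This gives a Hilbert-space $*$-representation of $C_c(\G)$, which is bounded by the universal norm and therefore extends to $A$.

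\emph{Nondegeneracy.} We have $\xi_c=L_{r(c)}\xi_c$, so $\varphi_{\bm n}(A)X_{\bm n}$ contains every elementary vector and is therefore dense. This makes $X_{\bm n}$ an $A$-correspondence.

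\emph{Non-injectivity.} For the last claim, a simple example suffices. Take a vertex $v$ that is the range of no arrow of degree $\bm n$ (for instance a source in a $k$-graph). Then $L_v=0$ while $p_v\ne0$, so $\varphi_{\bm n}$ is not injective.
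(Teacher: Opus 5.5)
Your proposal is correct and follows essentially the same route as the paper. Both arguments show that $L_h$ is a partial isometry in the orbit coordinates with adjoint $L_{h^{-1}}$, derive $L_gL_h=L_{gh}$ (or $0$) from the category laws, extend to $\cstar{\G}$ by its universal property, and get nondegeneracy from $L_{r(c)}\xi_c=\xi_c$. Your contractivity check via $L_h^*L_h=L_{s(h)}$ and your non-injectivity witness (a vertex $v$ with $v\Ccat^{\bm n}=\emptyset$, so that $\varphi_{\bm n}(p_v)=0$) supply details the paper's proof leaves implicit.
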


\begin{proof}
Use the coordinates in Proposition~\ref{prop:coordinate-module}.  If
$s(h)=r(t)$, there are unique $t'\in T_{\bm n}$ and $g\in\G$ such that
$ht=t'g$, and
\begin{equation}
\label{eq:monomial-left-action}
                         L_he_t=e_{t'}u_g.
\end{equation}
Left multiplication by $h$ bijects its domain of right orbits with the domain
of left multiplication by $h^{-1}$.  Formula
\eqref{eq:monomial-left-action} therefore defines an adjointable partial
isometry with adjoint $L_{h^{-1}}$.  The category laws give
$L_gL_h=L_{gh}$ for composable $g,h$ and zero otherwise.  The universal
property of the full groupoid $C^*$-algebra gives the homomorphism
$\varphi_{\bm n}$.  Finite sums of vertex projections form an approximate
identity for $A$, and $L_{r(c)}\xi_c=\xi_c$; hence the left action is
nondegenerate.
\end{proof}

Although a transversal was used to verify the Hilbert-module axioms, both
\eqref{eq:right-module-action} and \eqref{eq:intrinsic-inner-product} are
intrinsic.  A different transversal only changes the coordinate vectors by
canonical groupoid partial isometries.

\subsection{Multiplication unitaries}

For $a\in\Ccat^{\bm m}$ and $b\in\Ccat^{\bm n}$ define
\begin{equation}
\label{eq:U-on-generators}
 U_{\bm m,\bm n}(\xi_a\otimes\xi_b)=
 \begin{cases}
   \xi_{ab},&s(a)=r(b),\\
   0,&\text{otherwise}.
 \end{cases}
\end{equation}
A noncomposable elementary tensor is already zero: indeed,
$\ip{\xi_a}{\xi_a}=p_{s(a)}$, and
$\varphi_{\bm n}(p_{s(a)})\xi_b=0$ unless $s(a)=r(b)$.

\begin{theorem}
\label{thm:linearization}
For every $\bm m,\bm n\in\Nat^k$, formula
\eqref{eq:U-on-generators} extends to a unitary correspondence map
\[
 U_{\bm m,\bm n}\colon
 X_{\bm m}\otimes_A X_{\bm n}\longrightarrow X_{\bm m+\bm n}.
\]
These unitaries are associative and make
$X(\Ccat)=\{X_{\bm n}:\bm n\in\Nat^k\}$ a product system of
$A$-correspondences.
\end{theorem}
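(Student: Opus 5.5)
The plan is to prove that $U_{\bm m,\bm n}$ is isometric on the algebraic tensor product, that it is a bimodule map, and that its range is dense. Associativity and the zero-degree conditions then follow from associativity in $\Ccat$.

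\textbf{Isometry.} Let $a,a'\in\Ccat^{\bm m}$ and $b,b'\in\Ccat^{\bm n}$ be composable pairs. The interior tensor product gives
\[
 \ip{\xi_a\otimes\xi_b}{\xi_{a'}\otimes\xi_{b'}}
 =\ip{\xi_b}{\varphi_{\bm n}(\ip{\xi_a}{\xi_{a'}})\xi_{b'}}.
\]
\begin{itemize}
\item If $a'=ag$ with $g\in\G$, this equals $\ip{\xi_b}{\xi_{gb'}}$. That is $u_h$ when $gb'=bh$, and $0$ when $gb'\notin b\G$.
\item If $a'\notin a\G$, it equals $0$.
\end{itemize}
We must show this agrees with $\ip{\xi_{ab}}{\xi_{a'b'}}$.

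In the forward direction, if $a'=ag$ and $gb'=bh$, then
\[
 abh=agb'=a'b',
\]
so both inner products equal $u_h$.

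The converse is the key step, and the one the introduction flags as using weak factorization. Suppose $a'b'=abh$ for some $h\in\G$. Then $a'\cdot b'$ and $a\cdot(bh)$ are two factorizations of the same arrow with degrees $\bm m,\bm n$. Weak factorization (equivalently, injectivity in Proposition~\ref{prop:multiplication-balanced}) gives $g\in\G$ with $a'=ag$ and $b'=g^{-1}bh$. Hence $gb'=bh$, and the tensor inner product is also $u_h$. By Lemma~\ref{lem:right-action-free}, $h$ is unique, so the two values match exactly. In the remaining case neither side lies in the other's orbit, and both inner products vanish. Noncomposable elementary tensors are zero, as already noted.

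By sesquilinearity, $U$ preserves inner products on the algebraic span of elementary tensors $\xi_a\otimes\xi_b$. These span a dense subspace, since the $\xi$'s are dense in each factor and $\xi_a\otimes\xi_b u_g=\xi_a\otimes\xi_{bg}$. So $U$ is well defined, because null vectors map to null vectors, and it extends to an isometry.

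\textbf{Surjectivity and bimodularity.} Every $c\in\Ccat^{\bm m+\bm n}$ factors as $c=ab$ by weak factorization, so $\xi_c$ lies in the range of $U$. The range is therefore dense, and being isometric it is closed, so it is everything. Bimodularity comes from $L_h\xi_{ab}=\xi_{(ha)b}$ and $\xi_{ab}u_g=\xi_{a(bg)}$, together with density of $C_c(\G)$ in $A$ and continuity. An isometric surjective module map is unitary and adjointable.

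\textbf{Product system axioms.} When an index is zero, we use the identification $X_{\bm 0}\cong A$ via $\xi_g\mapsto u_g$ from Proposition~\ref{prop:coordinate-module}. Under it, $U_{\bm 0,\bm n}$ becomes $u_g\otimes\xi_b\mapsto L_g\xi_b$, and $U_{\bm m,\bm 0}$ becomes $\xi_a\otimes u_g\mapsto\xi_au_g$; these are exactly the module actions. Associativity reduces, on elementary triples, to $(ab)c=a(bc)$, and extends by density and continuity.

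The main care point is the converse computation in the isometry step: matching the tensor inner product with $\ip{\xi_{ab}}{\xi_{a'b'}}$ requires both weak factorization and freeness of the right action.
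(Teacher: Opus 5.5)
Your proposal is correct and follows the paper's proof essentially step for step. It uses the same case analysis of the tensor inner product, the same application of weak factorization to the two factorizations $a(bh)$ and $a'b'$ (with freeness pinning down $g$ and $h$), surjectivity from the existence of factorizations, and associativity plus the zero-degree identifications via Proposition~\ref{prop:coordinate-module}. The only difference is cosmetic: you obtain the balancing relation implicitly from inner-product preservation (null vectors go to null vectors), whereas the paper checks it directly from $(ag)b=a(gb)$.
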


\begin{proof}
The balancing relation follows directly from associativity in $\Ccat$:
\[
 U_{\bm m,\bm n}(\xi_{ag}\otimes\xi_b)
 =\xi_{(ag)b}
 =\xi_{a(gb)}
 =U_{\bm m,\bm n}(\xi_a\otimes L_g\xi_b).
\]
The same computation proves compatibility with the exterior left and right
$A$-actions.

For the inner products, take
composable pairs $(a,b)$ and $(a',b')$ of degrees $(\bm m,\bm n)$.  By the
definition of the tensor-product inner product,
\begin{align*}
 &\ip{\xi_a\otimes\xi_b}{\xi_{a'}\otimes\xi_{b'}}_A\\
 &\qquad=
 \begin{cases}
   u_h,&a'=ag\text{ and }bh=gb'
          \text{ for some }g,h\in\G,\\
   0,&\text{otherwise}.
 \end{cases}
\end{align*}
In the nonzero case,
\[
                         (ab)h=a(gb')=a'b',
\]
so the displayed value is
$\ip{\xi_{ab}}{\xi_{a'b'}}_A$.  Conversely, suppose this latter inner product
is $u_h$.  Then $(ab)h=a'b'$.  Apply weak factorization to the two
factorizations $a(bh)$ and $a'b'$ of the same arrow at degrees
$\bm m,\bm n$.  There is a $g\in\G$ such that
\[
                         a'=ag,\qquad b'=g^{-1}bh,
\]
or equivalently $bh=gb'$.  Thus the tensor-product inner product is also
$u_h$.  This proves that \eqref{eq:U-on-generators} is isometric.

If $c\in\Ccat^{\bm m+\bm n}$, weak factorization gives $c=ab$ with the
prescribed degrees.  Hence $\xi_c$ lies in the range.  The isometry has closed
range containing a dense spanning set, so it is unitary.  Finally, both ways
of multiplying a composable elementary triple map it to $\xi_{abc}$.
Associativity follows by density.  Proposition~\ref{prop:coordinate-module}
identifies the zero fiber and zero-degree multiplication with the standard
correspondence $A$ and its module actions.
\end{proof}

\subsection{A sufficient condition for compact alignment}

Call $(\Ccat,\ell)$ \emph{orbit-finite} if
$\Ccat^{\bm n}/\G$ is finite for every nonzero $\bm n\in\Nat^k$.

\begin{corollary}
\label{cor:compact-alignment}
If $(\Ccat,\ell)$ is orbit-finite, then
$\varphi_{\bm n}(A)\subseteq\compacts(X_{\bm n})$ for every nonzero
$\bm n$, and $X(\Ccat)$ is compactly aligned.
\end{corollary}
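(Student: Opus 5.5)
Orbit-finiteness makes each $X_{\bm n}$ ($\bm n\neq\bm 0$) a finite direct sum of cyclic projective modules $p_{s(t)}A$. I will show that $\varphi_{\bm n}(A)$ lies in $\compacts(X_{\bm n})$ and then invoke the standard criterion that this inclusion forces compact alignment.

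\textbf{Step 1: the identity is compact.} Fix $\bm n\neq\bm 0$ and a finite transversal $T_{\bm n}$. Proposition~\ref{prop:coordinate-module} gives
\[
 X_{\bm n}\cong\bigoplus_{t\in T_{\bm n}}p_{s(t)}A .
\]
Consider the rank-one operators $\theta_{e_t,e_t}$. Since $\ip{e_t}{e_t}=p_{s(t)}$, each $e_t$ satisfies $e_t\ip{e_t}{e_t}=e_t$, so $\theta_{e_t,e_t}$ is the projection onto the $t$-summand. Distinct summands are orthogonal, so
\[
 \sum_{t\in T_{\bm n}}\theta_{e_t,e_t}=1 ,
\]
a finite sum. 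Hence $1\in\compacts(X_{\bm n})$, and $\compacts(X_{\bm n})=\adjointables(X_{\bm n})$. In particular $\varphi_{\bm n}(A)\subseteq\compacts(X_{\bm n})$.

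In intrinsic terms, $\theta_{\xi_t,\xi_t}$ maps $\xi_c$ to $\xi_c$ when $c\in t\G$ and to $0$ otherwise. This is independent of the choice of transversal.

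\textbf{Step 2: compact alignment.} I will use Fowler's observation that a product system over $\Nat^k$ in which every left action is by compacts is compactly aligned. The mechanism is as follows. For $\bm p\le\bm q$, the amplification $\iota_{\bm p}^{\bm q}\colon\adjointables(X_{\bm p})\to\adjointables(X_{\bm q})$, given by $S\mapsto S\otimes 1$ through $U_{\bm p,\bm q-\bm p}$, carries $\compacts(X_{\bm p})$ into $\compacts(X_{\bm q})$ whenever $\varphi_{\bm q-\bm p}(A)\subseteq\compacts(X_{\bm q-\bm p})$ (Pimsner; see also \cite[Proposition~4.7]{Fowler2002}).

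In our setting this holds for every $\bm p\le\bm q$. If $\bm q=\bm p$ the amplification is the identity; otherwise $\bm q-\bm p\neq\bm 0$ and Step 1 applies.

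Now take $S\in\compacts(X_{\bm m})$ and $T\in\compacts(X_{\bm n})$. Both amplifications to $X_{\bm m\vee\bm n}$ are compact, so their product is compact. The case where $\bm m$ or $\bm n$ is $\bm 0$ is included: $\compacts(X_{\bm 0})=A$ acts on $X_{\bm m\vee\bm n}$ through the left action, and that left action is compact by Step 1 when $\bm m\vee\bm n\neq\bm 0$. If $\bm m=\bm n=\bm 0$, everything takes place in $A$.

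A more hands-on alternative avoids citing Fowler. Here $X_{\bm q}$ itself has compact identity, so every adjointable operator on it is already compact.

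\textbf{Expected obstacle.} The only point needing care is that $\sum_t\theta_{e_t,e_t}$ really equals the identity when the vertex projections $p_{s(t)}$ are not units. This comes down to the computation $\theta_{e_t,e_t}(e_tu_g)=e_tp_{s(t)}u_g=e_tu_g$, which shows $\theta_{e_t,e_t}$ is the identity on the $t$-summand. Once that is checked, the rest is formal.
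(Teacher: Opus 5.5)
Your proof is correct and essentially the paper's: Step 1 is the same computation $1=\sum_{t\in T_{\bm n}}\Theta_{e_t,e_t}$. The paper then finishes with what you call the hands-on alternative: for $\bm m\vee\bm n\neq\bm 0$ the amplified product is adjointable on $X_{\bm m\vee\bm n}$ and hence compact, and $\bm m=\bm n=\bm 0$ reduces to $\compacts(A_A)=A$. Your primary route through the Pimsner--Fowler amplification lemma is also valid but unnecessary, since compactness of the individual amplifications is never needed once $\adjointables(X_{\bm m\vee\bm n})=\compacts(X_{\bm m\vee\bm n})$.
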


\begin{proof}
Choose a finite transversal $T_{\bm n}$.  In the coordinates
\eqref{eq:coordinate-module},
\[
                  1_{\adjointables(X_{\bm n})}
                  =\sum_{t\in T_{\bm n}}\Theta_{e_t,e_t}.
\]
Thus the identity operator is compact.  Since the compact operators form an
ideal in the adjointable operators,
$\adjointables(X_{\bm n})=\compacts(X_{\bm n})$.  This proves the assertion
about the left action.

Let $S\in\compacts(X_{\bm m})$ and
$T\in\compacts(X_{\bm n})$.  If
$\bm m\vee\bm n\ne\bm 0$, their amplified product is adjointable on
$X_{\bm m\vee\bm n}$ and is therefore compact by the preceding paragraph.
If $\bm m\vee\bm n=\bm 0$, then $\bm m=\bm n=\bm 0$ and the assertion is
the equality $\compacts(A_A)=A$.  Hence the system is compactly aligned.
\end{proof}
 \section{Degree-one coordinates and strict splittings}
\label{sec:coordinates}

In the factorization-unitaries subsection we retain the standing assumptions of
Section~\ref{sec:linearization}: $(\Ccat,\ell)$ is countable and left
cancellative, $\G=\Ccat^\times$, $A=C^*(\G)$, and $X(\Ccat)$ is the product
system of Theorem~\ref{thm:linearization}.

\subsection{Factorization unitaries}

Put $E_i=X_{\bm e_i}$.  For distinct $i,j$, define
\begin{equation}
\label{eq:chi-ij}
 \chi_{ij}
 =U_{\bm e_j,\bm e_i}^{-1}U_{\bm e_i,\bm e_j}
 \colon E_i\otimes_AE_j\longrightarrow E_j\otimes_AE_i.
\end{equation}
These maps are the correspondence-level commuting squares.

\begin{proposition}
\label{prop:degree-one-coherence}
The product system $X(\Ccat)$ is determined, up to isomorphism, by the
correspondences $E_1,\ldots,E_k$ and the unitaries $\chi_{ij}$.  For distinct
$i,j,l$, the latter satisfy the braid relation
\begin{align}
\label{eq:braid-relation}
 &(\chi_{jl}\otimes 1_{E_i})(1_{E_j}\otimes\chi_{il})
       (\chi_{ij}\otimes1_{E_l}) \\
 &\qquad=(1_{E_l}\otimes\chi_{ij})(\chi_{il}\otimes1_{E_j})
       (1_{E_i}\otimes\chi_{jl}),
\end{align}
with the canonical associativity identifications suppressed.  If orbit
transversals are chosen in the degree-one fibers, each $\chi_{ij}$ is a
monomial unitary whose nonzero coefficients are canonical elements of
$\cstar{\G}$.  Moreover, $\chi_{ji}=\chi_{ij}^{-1}$, and swaps acting on
disjoint tensor positions commute.
\end{proposition}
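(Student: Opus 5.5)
I will argue on elementary vectors throughout, since by Proposition~\ref{prop:coordinate-module} and Theorem~\ref{thm:linearization} the vectors $\xi_a\otimes\xi_b$ with $s(a)=r(b)$ span dense submodules of every tensor product in sight.

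The first step is to compute $\chi_{ij}$ explicitly. For $a\in\Ccat^{\bm e_i}$ and $b\in\Ccat^{\bm e_j}$ with $s(a)=r(b)$, weak factorization gives $ab=b'a'$ with $b'\in\Ccat^{\bm e_j}$ and $a'\in\Ccat^{\bm e_i}$. By Proposition~\ref{prop:multiplication-balanced}, the class $[b',a']$ is unique, and the tensor $\xi_{b'}\otimes\xi_{a'}$ depends only on that class by the balancing relation. Hence
\[
 \chi_{ij}(\xi_a\otimes\xi_b)=\xi_{b'}\otimes\xi_{a'}.
\]
The relation $\chi_{ji}=\chi_{ij}^{-1}$ is then immediate from the definition \eqref{eq:chi-ij}. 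The commutation of swaps acting on disjoint tensor positions is functoriality of the interior tensor product, $(\chi\otimes1)(1\otimes\chi')=(1\otimes\chi')(\chi\otimes1)$, once the associativity identifications are suppressed.

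For the monomial assertion, fix transversals $T_i\subseteq\Ccat^{\bm e_i}$. Use Proposition~\ref{prop:coordinate-module} together with \eqref{eq:monomial-left-action} to identify $E_i\otimes_AE_j$ with $\bigoplus_{(t,t')}p_{s(t')}A$. Here $(t,t')$ runs over $T_i\times T_j$, and each pair is paired with the orbit of $t'$ under the left action, so that the basis vectors are $e_t\otimes e_{t'}$ with $s(t)=r(t')$. For such a pair, write $tt'=\tau\tau'g$ with $\tau\in T_j$, $\tau'\in T_j$-compatible $\tau'\in T_i$, and $g\in\G$. This normal form is unique by weak factorization and freeness (Lemma~\ref{lem:right-action-free}). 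Then $\chi_{ij}(e_t\otimes e_{t'})=(e_\tau\otimes e_{\tau'})u_g$. Since $\chi_{ij}$ is unitary, the assignment $(t,t')\mapsto(\tau,\tau')$ is a bijection of basis indices, and its coefficients are canonical partial isometries $u_g$.

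For the braid relation, I evaluate both sides on $\xi_a\otimes\xi_b\otimes\xi_c$ of degrees $\bm e_i,\bm e_j,\bm e_l$. Each factor $\chi$ preserves the image under the total multiplication map $U$ into $X_{\bm e_i+\bm e_j+\bm e_l}$, because $U_{\bm e_j,\bm e_i}\chi_{ij}=U_{\bm e_i,\bm e_j}$ and $U$ is associative. Thus both sides send the triple tensor to the unique vector of $E_l\otimes E_j\otimes E_i$ whose image under the iterated $U$ is $\xi_{abc}$. Since the iterated $U$ is unitary, and in particular injective, the two sides agree. The same argument handles any two composites of swaps with the same endpoints, so this is really a coherence statement rather than a computation.

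The main obstacle is the determination claim. My plan is to rebuild $X_{\bm n}$ as $E_1^{\otimes n_1}\otimes\cdots\otimes E_k^{\otimes n_k}$ via iterated $U$. I then define $U_{\bm m,\bm n}$ by composing with the shuffle unitary assembled from the $\chi_{ij}$ that reorders colors. Well-definedness of this shuffle, meaning independence of the reduced word chosen for the permutation, uses the braid relation and the disjoint-commutation relation. It also uses the fact that equal colors are never swapped, so only the coloured shuffle structure matters. This is the standard Fowler / Fowler--Sims style argument. I would make it easy here by noting again that every composite is forced by compatibility with the iterated $U$ into $X(\Ccat)$. Consequently an isomorphism of the data $(E_i,\chi_{ij})$ induces isomorphisms of all fibers intertwining the multiplications. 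The care needed is in checking that the resulting family is associative, which again reduces to uniqueness of composites of swaps.
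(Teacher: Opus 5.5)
Your proposal is correct and follows essentially the same route as the paper. The monomial form comes from refactoring $tt'$ through the degree-one transversals and balancing, the braid relation comes from $U_{\bm e_j,\bm e_i}\chi_{ij}=U_{\bm e_i,\bm e_j}$ together with associativity of the iterated multiplication, and determination comes from identifying $X_{\bm n}$ with $E_1^{\otimes n_1}\otimes_A\cdots\otimes_A E_k^{\otimes n_k}$ and reordering by adjacent swaps. Your remark that any composite of swaps is forced to equal the corresponding quotient of iterated multiplication unitaries gives word-independence a bit more directly than the paper's Coxeter-relation argument, and your check that $(t,t')\mapsto(\tau,\tau')$ is a bijection makes \emph{monomial} precise.
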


\begin{proof}
Iterating the multiplication unitaries identifies every $X_{\bm n}$ with
\[
 E_1^{\otimes n_1}\otimes_A\cdots\otimes_AE_k^{\otimes n_k}.
\]
Changing the order of adjacent colors is implemented by
\eqref{eq:chi-ij}.  Associativity of the maps $U$ says that the two ways of
reversing three distinct adjacent colors agree, which is exactly
\eqref{eq:braid-relation}.  The identity
$\chi_{ji}=\chi_{ij}^{-1}$ follows at once from \eqref{eq:chi-ij}, while
swaps on disjoint tensor positions commute by functoriality of the interior
tensor product.  These are the inverse, braid, and far-commutativity Coxeter
relations.  Consequently, the unitary associated to a reordering is
independent of the chosen adjacent-swap word, and the stated data recover the
multiplication maps.

For the coordinate assertion, take composable
$x\in T_{\bm e_i}$ and $y\in T_{\bm e_j}$.  Refactor $xy=ab$ with
$\ell(a)=\bm e_j$ and $\ell(b)=\bm e_i$.  Write uniquely
\[
                         a=y'g,\qquad gb=x'h
\]
with $y'\in T_{\bm e_j}$, $x'\in T_{\bm e_i}$, and $g,h\in\G$.  Balancing
gives
\[
 \chi_{ij}(e_x\otimes e_y)
   =(e_{y'}\otimes e_{x'})u_h.
\]
Thus every basis vector is sent to a basis vector times one groupoid partial
isometry.  The formula is independent of the intermediate factorization by
weak factorization and balancing.
\end{proof}

For $k=1$ there are no commutation maps and
$X_n\cong X_1^{\otimes_A n}$.  In the one-object case arising from a
self-similar group $G$ acting on the free monoid on a finite alphabet $Y$, one
has
\[
 X_1=\bigoplus_{y\in Y}C^*(G),
 \qquad u_g e_y=e_{g\cdot y}u_{g|_y}.
\]
This is the full-$C^*(G)$ completion of Nekrashevych's algebraic
self-similarity bimodule and its matrix-valued linear recursion
\cite[pp.~227--228]{Nekrashevych2004}.  A general rank-one generalized
higher-rank graph need not come from a one-object free monoid, so the
identification with Nekrashevych's setting requires these additional
hypotheses.

Every Toeplitz representation satisfies the category relations below.

\begin{proposition}
\label{prop:representation-relations}
Let $\psi$ be a Toeplitz representation of $X(\Ccat)$ in a $C^*$-algebra.
For $g\in\G$ and $c\in\Ccat^{\bm n}$ set
\[
                   U_g=\psi_{\bm 0}(u_g),
                   \qquad S_c=\psi_{\bm n}(\xi_c).
\]
Then $U$ is a groupoid representation and
\begin{align*}
 S_aS_b&=S_{ab} &&\text{if }s(a)=r(b),\\
 U_gS_c&=S_{gc} &&\text{if }s(g)=r(c),\\
 S_cU_g&=S_{cg} &&\text{if }s(c)=r(g).
\end{align*}
The corresponding product is zero when the indicated arrows are not
composable.  For $c,d$ of the same degree,
\[
 S_c^*S_d=
 \begin{cases}
   U_g,&d=cg\text{ for }g\in\G,\\
   0,&d\notin c\G.
 \end{cases}
\]
\end{proposition}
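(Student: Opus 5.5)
The plan is to unwind the definition of a Toeplitz representation of a product system in Fowler's sense. Such a representation $\psi$ consists of a $*$-homomorphism $\psi_{\bm 0}\colon A\to B$ and linear maps $\psi_{\bm n}\colon X_{\bm n}\to B$ satisfying three conditions:
\[
\psi_{\bm n}(\xi)^*\psi_{\bm n}(\eta)=\psi_{\bm 0}(\ip{\xi}{\eta}_A),
\qquad
\psi_{\bm n}(a\cdot\xi\cdot b)=\psi_{\bm 0}(a)\psi_{\bm n}(\xi)\psi_{\bm 0}(b),
\qquad
\psi_{\bm m}(\xi)\psi_{\bm n}(\eta)=\psi_{\bm m+\bm n}\bigl(U_{\bm m,\bm n}(\xi\otimes\eta)\bigr).
\]
Each asserted relation should then follow by evaluating one of these conditions on elementary vectors and invoking a formula already established.

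First, $U_g=\psi_{\bm 0}(u_g)$ is the image of the canonical partial isometries under a $*$-homomorphism. The relations $u_g^*=u_{g^{-1}}$, together with $u_gu_h=u_{gh}$ or $0$, are therefore transported verbatim, so $U$ is a groupoid representation.

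Second, the product relation $S_aS_b=\psi_{\bm m+\bm n}(U_{\bm m,\bm n}(\xi_a\otimes\xi_b))$ comes from multiplicativity. Formula \eqref{eq:U-on-generators} evaluates the right side to $S_{ab}$ when $s(a)=r(b)$ and to $0$ otherwise; the zero case uses the observation preceding Theorem~\ref{thm:linearization} that a noncomposable elementary tensor vanishes.

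Third, the bimodule condition handles the mixed products. For $U_gS_c$, take $a=u_g$ and use Proposition~\ref{prop:left-action}, where $\varphi_{\bm n}(u_g)\xi_c=L_g\xi_c$ is given by \eqref{eq:left-action}. For $S_cU_g$, use the right action \eqref{eq:right-module-action}. In both cases the noncomposable case gives $0$.

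Finally, the formula for $S_c^*S_d$ is the inner-product condition applied to $\xi_c,\xi_d$, combined with the intrinsic inner product \eqref{eq:intrinsic-inner-product}. The element $g$ appearing there is unique by Lemma~\ref{lem:right-action-free}, so the right side is well defined.

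No step is a real obstacle; the only point needing care is convention-checking. One must confirm that the notion of Toeplitz representation used includes the bimodule compatibility at degree $\bm 0$. If only the inner-product and multiplicativity axioms are assumed, that compatibility follows by taking $\bm m=\bm 0$ or $\bm n=\bm 0$ in the multiplicativity axiom. This works because the zero-degree multiplication maps are the module actions by Proposition~\ref{prop:coordinate-module} and Theorem~\ref{thm:linearization}, so $\psi_{\bm 0}(u_g)\psi_{\bm n}(\xi_c)=\psi_{\bm n}(u_g\cdot\xi_c)$, and similarly on the right.
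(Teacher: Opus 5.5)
Your proposal is correct and follows essentially the same route as the paper, which reads off each relation from the multiplicativity, bimodule, and inner-product axioms of a Toeplitz representation, applied to \eqref{eq:U-on-generators}, \eqref{eq:left-action}, \eqref{eq:right-module-action}, and \eqref{eq:intrinsic-inner-product}. Your extra remark that bimodule compatibility is the degree-$\bm 0$ case of multiplicativity is a convention check consistent with Fowler's definition.
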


\begin{proof}
The assertions are respectively the multiplicativity, bimodule, and
inner-product axioms for a Toeplitz representation, applied to
\eqref{eq:U-on-generators}, \eqref{eq:left-action},
\eqref{eq:right-module-action}, and \eqref{eq:intrinsic-inner-product}.
\end{proof}

\subsection{The \texorpdfstring{$R$}{R}-condition as strictifiability}

The result in this subsection is categorical: here $(\Ccat,\ell)$ may be any
generalized higher-rank graph, without countability or cancellation
assumptions, and $\G=\Ccat^\times$.

An arrow of $\Ccat$ is an \emph{atom} if it is noninvertible and every
factorization of it has an invertible factor.  In a generalized higher-rank
graph, the atoms are precisely the arrows whose degrees are among
$\bm e_1,\ldots,\bm e_k$ \cite[Lemma~3.1]{LawsonVdovina2022}.
Lawson--Vdovina formulate the $R$-condition using representatives for
generators of maximal principal right ideals; their Lemmas~2.1 and~2.3
identify these representatives, up to right multiplication by invertible
arrows, with the atoms used here.  Choose an atomic right-orbit transversal
$D$, and let $\Lambda=\langle D\rangle$ be the wide subcategory generated by
$D$.  Their $R$-condition, relative to this choice, is
\begin{equation}
\label{eq:r-condition}
 \lambda=\mu g,\quad \lambda,\mu\in\Lambda,\quad g\in\G
 \quad\Longrightarrow\quad
 \lambda=\mu\text{ and }g=s(\mu).
\end{equation}
The condition is imposed on the generated subcategory, not only on $D$.

\begin{definition}
\label{def:strict-splitting}
A \emph{strict splitting} of the biset product system
$\{\Ccat^{\bm n}\}$ is a family of subsets
$\Lambda^{\bm n}\subseteq\Ccat^{\bm n}$ such that
$\Lambda^{\bm 0}=\Ccat^0$, multiplication maps composable elements of
$\Lambda^{\bm m}\times\Lambda^{\bm n}$ into
$\Lambda^{\bm m+\bm n}$, and
\begin{equation}
\label{eq:strict-normal-form}
 \Lambda^{\bm n}\,{}_{s}\!\mathbin{*}_{r}\G
 \longrightarrow\Ccat^{\bm n},
 \qquad (\lambda,g)\longmapsto\lambda g
\end{equation}
is a bijection for every $\bm n$.
\end{definition}

\begin{theorem}
\label{thm:r-condition-splitting}
Let $D$ be an atomic right-orbit transversal and
$\Lambda=\langle D\rangle$.  The following are equivalent.
\begin{enumerate}[(i)]
\item The $R$-condition \eqref{eq:r-condition} holds.
\item The sets $\Lambda^{\bm n}=\Lambda\cap\Ccat^{\bm n}$ form a strict
      splitting.
\item The category $\Lambda$ is a $k$-graph and every $c\in\Ccat$ has a
      unique normal form $c=\lambda g$ with $\lambda\in\Lambda$ and
      $g\in\G$.
\end{enumerate}
In this case $\Ccat$ is isomorphic to a size-preserving Zappa--Sz\'ep product
$\Lambda\bowtie\G$.
\end{theorem}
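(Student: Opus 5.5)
I will prove (i)$\Rightarrow$(ii)$\Rightarrow$(iii)$\Rightarrow$(i), and then obtain the Zappa--Sz\'ep structure from the normal form.

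\textbf{(i)$\Rightarrow$(ii).} The first step is existence of a normal form. I claim every $c\in\Ccat^{\bm n}$ can be written $c=\lambda g$ with $\lambda\in\Lambda^{\bm n}$. I will argue by induction on $|\bm n|=n_1+\cdots+n_k$.
\begin{itemize}
\item For $\bm n=\bm 0$, take $\lambda=r(c)$ and $g=c$.
\item Otherwise pick $i$ with $n_i>0$. By weak factorization, $c=ab$ with $\ell(a)=\bm e_i$.
\item Since $a$ is an atom, $a=dh$ with $d\in D$ and $h\in\G$.
\item Now $hb$ has degree $\bm n-\bm e_i$. By induction $hb=\mu g$ with $\mu\in\Lambda$, so $c=(d\mu)g$ with $d\mu\in\Lambda^{\bm n}$.
\end{itemize}
The map \eqref{eq:strict-normal-form} is therefore surjective, and injectivity is exactly \eqref{eq:r-condition}. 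The degree of an element of $\Lambda$ is additive, so $\Lambda\cap\Ccat^{\bm n}$ is closed under multiplication in the required graded way. Also $\Lambda^{\bm 0}=\Ccat^0$: a product of atoms has nonzero degree, so the only degree-zero elements of $\Lambda$ are identities.

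\textbf{(ii)$\Rightarrow$(iii).} Uniqueness of the normal form is bijectivity of \eqref{eq:strict-normal-form}. Since $\Lambda^{\bm 0}=\Ccat^0$, $\Lambda$ has no nonidentity invertibles.

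To show $\Lambda$ is a $k$-graph, take $\lambda\in\Lambda^{\bm m+\bm n}$.
\begin{itemize}
\item \emph{Existence.} Weak factorization gives $\lambda=ab$. Write $a=\alpha g$ with $\alpha\in\Lambda^{\bm m}$ and $gb=\beta h$ with $\beta\in\Lambda^{\bm n}$. Then $\lambda=(\alpha\beta)h$ with $\alpha\beta\in\Lambda$. Uniqueness of the normal form of $\lambda=\lambda\cdot s(\lambda)$ forces $h=s(\lambda)$ and $\lambda=\alpha\beta$.
\item \emph{Uniqueness.} Suppose $\alpha\beta=\alpha'\beta'$ in $\Lambda$. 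Weak factorization gives $\alpha'=\alpha g$. Uniqueness of normal forms in degree $\bm m$ yields $\alpha'=\alpha$ and $g=s(\alpha)$, hence $\beta'=\beta$.
\end{itemize}
So $\ell|_\Lambda$ has the unique factorization property.

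\textbf{(iii)$\Rightarrow$(i).} If $\lambda=\mu g$, apply uniqueness of the normal form to $c=\lambda=\lambda\,s(\lambda)=\mu g$.

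I expect the main subtlety to be the existence step in (i)$\Rightarrow$(ii). It uses Lawson--Vdovina's identification of atoms with degree-$\bm e_i$ arrows, via \cite[Lemma~3.1]{LawsonVdovina2022}, and this is needed so that each $\bm e_i$-factor lies in $D\G$.

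\textbf{Zappa--Sz\'ep structure.} For composable $g\in\G$ and $\lambda\in\Lambda$, define $g\cdot\lambda\in\Lambda$ and $g|_\lambda\in\G$ by the normal form
\[
g\lambda=(g\cdot\lambda)\,g|_\lambda .
\]
Uniqueness of normal forms and associativity in $\Ccat$ then give the Zappa--Sz\'ep cocycle identities, checked by comparing normal forms of $gh\lambda$ and of $g\lambda\mu$. The action is size-preserving because $\ell(g\lambda)=\ell(\lambda)$. The bijection $\Lambda\,{}_s\!*_r\G\to\Ccat$ then intertwines the product
\[
(\lambda,g)(\mu,h)=\bigl(\lambda(g\cdot\mu),\,g|_\mu h\bigr)
\]
with composition in $\Ccat$, which gives $\Ccat\cong\Lambda\bowtie\G$.
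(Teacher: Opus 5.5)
Your proof is correct. For (ii)$\Rightarrow$(iii) and for the return to the $R$-condition it coincides with the paper's argument:
\begin{itemize}
\item existence of factorizations in $\Lambda$ comes from writing $\lambda=(\alpha\beta)h$ and invoking uniqueness of the normal form of $\lambda=\lambda\,s(\lambda)$;
\item uniqueness comes from weak factorization followed by injectivity in degree $\bm m$;
\item the $R$-condition follows by comparing $\lambda\,s(\lambda)$ with $\mu g$.
\end{itemize}
Your Zappa--Sz\'ep paragraph is at the same level of detail as the paper's.

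The genuine difference is the first implication. The paper does not prove (i)$\Rightarrow$(iii); it cites Lawson--Vdovina's Theorem~4.2 and then obtains (ii) from (iii). You go directly from (i) to (ii). Your induction on total degree peels off a degree-$\bm e_i$ prefix by weak factorization, writes it as $dh$ with $d\in D$, and absorbs $h$ into the remaining factor. This produces normal forms, and injectivity of $(\lambda,g)\mapsto\lambda g$ is just a restatement of the $R$-condition.

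Because the paper's own (ii)$\Rightarrow$(iii) argument already yields the $k$-graph property of $\Lambda$, your cycle (i)$\Rightarrow$(ii)$\Rightarrow$(iii)$\Rightarrow$(i) makes the theorem self-contained. It also shows that the external citation is redundant: the $k$-graph structure is a consequence of the strict splitting rather than an input. This is essentially the route the paper's Lean development follows (normal-form existence plus $R$-condition $\Leftrightarrow$ strict splitting). The paper's route buys brevity and a direct tie to Lawson--Vdovina's formulation.

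One correction to your own assessment of the ``main subtlety'': you do not need Lawson--Vdovina's Lemma~3.1. You use only two facts, and both are immediate.
\begin{itemize}
\item An arrow of degree $\bm e_i$ is an atom. It is noninvertible because its degree is nonzero and $\ell^{-1}(\bm 0)=\G$. Any factorization of it has a factor of degree $\bm 0$, which is therefore invertible.
\item Elements of $D$ have nonzero degree, which is what gives $\Lambda^{\bm 0}=\Ccat^0$. This holds simply because atoms are noninvertible.
\end{itemize}
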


\begin{proof}
The implication (i)$\Rightarrow$(iii) is
\cite[Theorem~4.2]{LawsonVdovina2022}.  The unique normal form gives the
bijections \eqref{eq:strict-normal-form}, while closure under multiplication
holds because $\Lambda$ is a subcategory.  Thus (iii)$\Rightarrow$(ii).

Suppose (ii) holds.  Let
$\lambda\in\Lambda^{\bm m+\bm n}$.  Weak factorization gives
$\lambda=ab$ with degrees $\bm m,\bm n$.  Use the strict normal forms to
write $a=\alpha g$ and $gb=\beta h$, where
$\alpha\in\Lambda^{\bm m}$ and $\beta\in\Lambda^{\bm n}$.  Then
$\lambda=(\alpha\beta)h$.  Both $\lambda$ and $\alpha\beta$ belong to the
splitting, so uniqueness in \eqref{eq:strict-normal-form} gives
$\lambda=\alpha\beta$ and $h=s(\lambda)$.  Thus factorizations exist inside
$\Lambda$.  If
$\lambda=\alpha\beta=\alpha'\beta'$ are two such factorizations, weak
factorization gives
$\alpha'=\alpha g$ and $\beta'=g^{-1}\beta$.  Uniqueness of strict normal
forms forces $g$ to be an identity and then
$\alpha'=\alpha$, $\beta'=\beta$.  Hence $\Lambda$ is a $k$-graph and (iii)
holds.  Finally, uniqueness of the two normal forms for
$\lambda=\mu g$ proves (i).

When these conditions hold, factor $g\lambda$ uniquely as
\[
                         g\lambda=(g\cdot\lambda)(g|_\lambda),
 \qquad g\cdot\lambda\in\Lambda,\quad g|_\lambda\in\G.
\]
Associativity and uniqueness give the Zappa--Sz\'ep axioms, and
\[
 (\lambda,g)(\mu,h)
   =\bigl(\lambda(g\cdot\mu),(g|_\mu)h\bigr).
\]
This is the normal-form construction of
\cite[Proposition~4.4 and Theorem~4.8]{LawsonVdovina2022}.
\end{proof}

We return to the countable, left-cancellative setting of
Section~\ref{sec:linearization}.  The strict splitting gives coordinates for
the correspondence product system.  Let
$X(\Lambda^{\bm n})$ denote the graph correspondence of the directed graph
with edge set $\Lambda^{\bm n}$.  On
$X(\Lambda^{\bm n})\otimes_{C_0(\Lambda^0)}A$, define the left action on
canonical generators by
\begin{equation}
\label{eq:zappa-left-action}
 u_g(\delta_\lambda\otimes a)=
 \begin{cases}
  \delta_{g\cdot\lambda}\otimes u_{g|_\lambda}a,
       &s(g)=r(\lambda),\\
  0,&\text{otherwise}.
 \end{cases}
\end{equation}

We call a $k$-graph $\Lambda$ \emph{finite} when $\Lambda^{\bm n}$ is finite
for every $\bm n\in\Nat^k$.

\begin{theorem}
\label{thm:strict-correspondence-comparison}
Under the equivalent conditions of Theorem
\ref{thm:r-condition-splitting}, there are correspondence unitaries
\begin{equation}
\label{eq:strict-correspondence-unitary}
 W_{\bm n}\colon
 X(\Lambda^{\bm n})\otimes_{C_0(\Lambda^0)}A
 \longrightarrow X_{\bm n},
 \qquad
 W_{\bm n}(\delta_\lambda\otimes u_g)=\xi_{\lambda g}.
\end{equation}
They form an isomorphism of product systems, and multiplication in the left
side is
\begin{equation}
\label{eq:zappa-correspondence-product}
 (\delta_\lambda\otimes u_g)(\delta_\mu\otimes u_h)
 =
 \begin{cases}
 \delta_{\lambda(g\cdot\mu)}\otimes u_{(g|_\mu)h},
       &s(g)=r(\mu),\\
 0,&\text{otherwise}.
 \end{cases}
\end{equation}
If $\Lambda$ is finite, this product system is compactly aligned.  If, in
addition, the action $\G\to\operatorname{PIso}(\Lambda)$ is faithful, it is
the system constructed in
\cite[Proposition~5.1]{AfsarBrownloweRamaggeWhittaker2026}.
\end{theorem}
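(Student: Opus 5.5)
I would build $W_{\bm n}$ from the coordinate picture of Proposition~\ref{prop:coordinate-module}, using $T_{\bm n}=\Lambda^{\bm n}$ as the right-orbit transversal. This choice is legitimate because the strict normal form \eqref{eq:strict-normal-form} says exactly that every right $\G$-orbit in $\Ccat^{\bm n}$ meets $\Lambda^{\bm n}$ in precisely one point.

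\textbf{Step 1: the module unitaries.} The graph correspondence tensor product $X(\Lambda^{\bm n})\otimes_{C_0(\Lambda^0)}A$ is, as a right Hilbert $A$-module, the direct sum $\bigoplus_{\lambda\in\Lambda^{\bm n}}p_{s(\lambda)}A$. Here $\delta_\lambda\otimes a\mapsto e_\lambda p_{s(\lambda)}a$, because $\langle\delta_\lambda,\delta_\lambda\rangle=\delta_{s(\lambda)}$ acts on $A$ as $p_{s(\lambda)}$. Composing with \eqref{eq:coordinate-module} gives a unitary right-module map with $\delta_\lambda\otimes u_g\mapsto\xi_{\lambda g}$ when $s(\lambda)=r(g)$, and $0$ otherwise. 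This is $W_{\bm n}$.

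\textbf{Step 2: the left actions.} I would check that $W_{\bm n}$ intertwines \eqref{eq:zappa-left-action} with $L_g$ of \eqref{eq:left-action}. This reduces to the computation
\[
 L_g\xi_{\lambda h}=\xi_{g\lambda h}=\xi_{(g\cdot\lambda)(g|_\lambda)h},
\]
using the normal form $g\lambda=(g\cdot\lambda)(g|_\lambda)$ from Theorem~\ref{thm:r-condition-splitting}. Since both sides are bounded and the $u_g$ generate $A$, the two left actions agree on all of $A$. In passing, this shows that \eqref{eq:zappa-left-action} really extends to a $*$-homomorphism. For $\bm n=\bm 0$, $W_{\bm 0}$ is the identification $C_0(\Lambda^0)\otimes A\cong A$ of Proposition~\ref{prop:coordinate-module}.

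\textbf{Step 3: the product and the isomorphism of product systems.} I would define the product on the left-hand side by transport, namely $W_{\bm m+\bm n}^{-1}U_{\bm m,\bm n}(W_{\bm m}\otimes W_{\bm n})$. This is automatically unitary and associative by Theorem~\ref{thm:linearization}, and the $W$'s then form an isomorphism of product systems by construction. To get the formula \eqref{eq:zappa-correspondence-product}, apply $U$:
\[
 \xi_{\lambda g}\otimes\xi_{\mu h}\mapsto\xi_{\lambda g\mu h}=\xi_{\lambda(g\cdot\mu)(g|_\mu)h},
\]
and then pull back by $W^{-1}$. This uses $\lambda(g\cdot\mu)\in\Lambda$, which holds because $\Lambda$ is a subcategory.

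\textbf{Step 4: compact alignment and the comparison with \cite{AfsarBrownloweRamaggeWhittaker2026}.} If $\Lambda$ is finite, then $\Ccat^{\bm n}/\G\cong\Lambda^{\bm n}$ is finite, so Corollary~\ref{cor:compact-alignment} applies directly. For the identification with \cite[Proposition~5.1]{AfsarBrownloweRamaggeWhittaker2026}, I would match their fibres, left action, and multiplication formula with \eqref{eq:zappa-left-action} and \eqref{eq:zappa-correspondence-product}. Under the faithfulness hypothesis, $(\Lambda,\G)$ is a self-similar groupoid action in their sense, with $C^*(\G)$ as their coefficient algebra.

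\textbf{Main obstacle.} I expect the comparison in Step 4 to be the main difficulty. It requires checking conventions in \cite{AfsarBrownloweRamaggeWhittaker2026}: whether they use the full groupoid algebra, whether their fibres are $X(\Lambda^{\bm n})\otimes A$ or are built from a balanced product, and whether their restriction maps follow the same order. I would handle this by writing their generators explicitly on $\delta_\lambda\otimes u_g$ and checking that they agree with ours on generators. Density then gives agreement everywhere.
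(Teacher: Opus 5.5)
Your proposal is correct and follows the paper's proof essentially step for step. You take $\Lambda^{\bm n}$ as the transversal in Proposition~\ref{prop:coordinate-module}, match the left actions via $g\lambda=(g\cdot\lambda)(g|_\lambda)$, read the multiplication off the Zappa--Sz\'ep normal form, get compact alignment from Corollary~\ref{cor:compact-alignment}, and compare fibers, left actions and multiplication directly with the cited construction under faithfulness. The only difference is cosmetic: you define the product on the left side by transport through the $W_{\bm n}$ and then derive \eqref{eq:zappa-correspondence-product}, which makes unitarity and associativity automatic, whereas the paper starts from the formula and checks that the $W_{\bm n}$ intertwine it with the $U_{\bm m,\bm n}$.
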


\begin{proof}
The strict normal form says that $\Lambda^{\bm n}$ is a right-orbit
transversal, so Proposition~\ref{prop:coordinate-module} gives
the right-Hilbert-module unitary
\eqref{eq:strict-correspondence-unitary}.  If
$g\lambda=(g\cdot\lambda)(g|_\lambda)$, the two left actions agree because
\[
 u_g\xi_\lambda=\xi_{g\lambda}
 =\xi_{g\cdot\lambda}u_{g|_\lambda}.
\]
In particular, \eqref{eq:zappa-left-action} extends to a nondegenerate
left action of $A$, and \eqref{eq:strict-correspondence-unitary} is a
correspondence unitary.
The Zappa--Sz\'ep multiplication formula gives
\eqref{eq:zappa-correspondence-product}, so the $W_{\bm n}$ intertwine all
multiplication unitaries.

If $\Lambda$ is finite, every $\Lambda^{\bm n}$ is finite and identifies with
$\Ccat^{\bm n}/\G$.  Compact alignment follows from
Corollary~\ref{cor:compact-alignment}.  Under faithfulness, the fibers, left
actions, and multiplication in \eqref{eq:zappa-correspondence-product} are
exactly those in the cited construction.
\end{proof}

\begin{remark}
The faithfulness clause in the last theorem is essential for the literal
literature comparison.  A Zappa--Sz\'ep action may act trivially on
$\Lambda$ while retaining nontrivial restriction elements.  The
$R$-condition still holds for the product category, but the action is not a
self-similar groupoid action under the faithful-action convention of
\cite[Definition~3.3]{AfsarBrownloweRamaggeWhittaker2026}.
\end{remark}
 \section{A cancellative nonsplit example}
\label{sec:twisted-example}

The following example separates left cancellation from the $R$-condition and
exhibits the groupoid coefficient carried by a commuting square.

\begin{example}
\label{ex:heisenberg}
Let
\[
                         \Ccat=\Nat^2\times\Intg
\]
be the one-object category with multiplication
\begin{equation}
\label{eq:heisenberg-product}
 (m,n,l)(p,q,r)
   =(m+p,n+q,l+r-np).
\end{equation}
Its identity is $(0,0,0)$, and define
\[
                         \ell(m,n,l)=(m,n).
\]
Then $(\Ccat,\ell)$ is a cancellative generalized higher-rank $2$-graph that
does not satisfy the $R$-condition for any atomic right-orbit transversal.
\end{example}

\begin{proof}
For a third element $(a,b,c)$, both associations of a triple product have
third coordinate
\[
                      l+r+c-np-na-qa.
\]
Thus \eqref{eq:heisenberg-product} is associative.  Equality after
multiplication on either the left or the right first forces equality of the
two degree coordinates and then of the integer coordinate, so $\Ccat$ is
cancellative.  Its invertible elements are exactly
\[
                  \G=\{(0,0,l):l\in\Intg\}\cong\Intg.
\]

Fix a decomposition $(M,N)=(m,n)+(p,q)$.  A factorization of
$(M,N,L)$ with these degrees is a pair
\[
              (m,n,l)(p,q,r),\qquad l+r-np=L.
\]
If $(l,r)$ and $(l',r')$ give two such factorizations, put $t=l'-l$ and
$g_t=(0,0,t)$.  Then
\[
 (m,n,l')=(m,n,l)g_t,
 \qquad
 (p,q,r')=g_t^{-1}(p,q,r).
\]
This proves the weak factorization property.

There is one right $\G$-orbit in each of the two atomic degree fibers.  Hence
every atomic transversal has the form
\[
                  x=(1,0,a),\qquad y=(0,1,b)
\]
for some $a,b\in\Intg$.  Formula \eqref{eq:heisenberg-product} gives
\[
                 xy=(1,1,a+b),
                 \qquad yx=(1,1,a+b-1),
\]
and therefore
\[
                         xy=(yx)g_1.
\]
Both $xy$ and $yx$ belong to the subcategory generated by the transversal,
while $g_1$ is not the identity.  Thus \eqref{eq:r-condition} fails for every
choice of $a$ and $b$.
\end{proof}

The relation $xy=(yx)g_1$ identifies the obstruction to splitting: the two
atomic products differ by a nontrivial central element.

\begin{proposition}
\label{prop:heisenberg-product-system}
Let $u$ be the canonical unitary generating
$A=C^*(\Intg)\cong C(\mathbb T)$.  For the category in
Example~\ref{ex:heisenberg}, every correspondence $X_{(m,n)}$ is the standard
rank-one correspondence $A$, with identical left and right actions.  Relative
to the orbit representative $(m,n,0)$, the multiplication unitary is
\begin{equation}
\label{eq:heisenberg-unitary}
 U_{(m,n),(p,q)}(a\otimes b)=u^{-np}ab.
\end{equation}
In particular, for standard generators $\eta_1\in X_{(1,0)}$ and
$\eta_2\in X_{(0,1)}$,
\begin{equation}
\label{eq:heisenberg-chi}
                         \chi_{12}(\eta_1\otimes \eta_2)
                         =(\eta_2\otimes \eta_1)u.
\end{equation}
\end{proposition}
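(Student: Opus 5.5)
The plan is to use the coordinates of Proposition~\ref{prop:coordinate-module} with the transversal $T_{(m,n)}=\{(m,n,0)\}$. Each fiber $\Ccat^{(m,n)}$ is a single right $\G$-orbit, because $(m,n,0)(0,0,l)=(m,n,l)$ by \eqref{eq:heisenberg-product}. Hence $X_{(m,n)}\cong p_{s(t)}A=A$ via $\xi_{(m,n,l)}\mapsto u^l$, since the only object gives $p=1$. This identifies each fiber with the standard rank-one right module.

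For the left action I will compute $(0,0,h)(m,n,l)=(m,n,l+h)$; the correction term $-np$ vanishes because the left factor has $n=0$. So $L_h\xi_{(m,n,l)}=\xi_{(m,n,l+h)}$, which corresponds to multiplication by $u^h$ on the left of $u^l$. Since $A$ is commutative, this equals the right action, and the correspondence is the standard $A$ with identical actions.

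For \eqref{eq:heisenberg-unitary}, it suffices by linearity and density to check it on $a=u^l$, $b=u^r$, which correspond to $\xi_{(m,n,l)}$ and $\xi_{(p,q,r)}$. Theorem~\ref{thm:linearization} sends their tensor to $\xi_{(m+p,n+q,l+r-np)}$, and in coordinates this is $u^{l+r-np}=u^{-np}ab$. Since $U$ is a bounded module map, agreement on this spanning set gives the formula everywhere.

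For \eqref{eq:heisenberg-chi}, take $\eta_1=\xi_{(1,0,0)}$ and $\eta_2=\xi_{(0,1,0)}$. Then $U_{\bm e_1,\bm e_2}(\eta_1\otimes\eta_2)=u^{0}=\xi_{(1,1,0)}$, because $n=0$ there. In the other order, $U_{\bm e_2,\bm e_1}(\eta_2\otimes\eta_1 u)=u^{-1}\cdot u=1$, again $\xi_{(1,1,0)}$, since now $n=1$ and $p=1$. Applying $U_{\bm e_2,\bm e_1}^{-1}$ therefore gives $\chi_{12}(\eta_1\otimes\eta_2)=(\eta_2\otimes\eta_1)u$. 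This is consistent with the relation $xy=(yx)g_1$ found in Example~\ref{ex:heisenberg} for $a=b=0$.

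The only delicate point is keeping the identification $X_{(m,n)}\otimes_A X_{(p,q)}\cong A$ explicit ($a\otimes b\mapsto ab$) and tracking on which side the coefficient $u$ sits. Commutativity of $A$ removes any ambiguity, so no real obstacle is expected.
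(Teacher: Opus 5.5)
Your proposal is correct and follows the paper's proof essentially step for step. The single-orbit transversal $(m,n,0)$ gives $X_{(m,n)}\cong A$, and centrality of $\G$ gives the standard left action. The product $(m,n,0)(p,q,0)=(m+p,n+q,-np)$ yields \eqref{eq:heisenberg-unitary}, and comparing $U_{\bm e_1,\bm e_2}$ with $U_{\bm e_2,\bm e_1}$ on the standard generators (where the factor $u^{-1}$ appears in one order only) yields \eqref{eq:heisenberg-chi}.
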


\begin{proof}
Every nonempty homogeneous fiber is a free transitive right $\G$-set, so
Proposition~\ref{prop:coordinate-module} identifies its correspondence with
$A$.  The subgroup $\G$ is central in \eqref{eq:heisenberg-product}, and hence
the left action is the standard one.  The chosen representatives multiply as
\[
                 (m,n,0)(p,q,0)=(m+p,n+q,-np),
\]
which proves \eqref{eq:heisenberg-unitary}.

The map $U_{(1,0),(0,1)}$ sends $\eta_1\otimes \eta_2$ to the standard generator of
$X_{(1,1)}$, whereas $U_{(0,1),(1,0)}$ sends
$\eta_2\otimes \eta_1$ to that generator multiplied by $u^{-1}$.  Applying
\eqref{eq:chi-ij} gives \eqref{eq:heisenberg-chi}.
\end{proof}
 \section{Toeplitz and boundary algebras}
\label{sec:category-algebras}

Throughout this section $(\Ccat,\ell)$ is a countable, left-cancellative
generalized higher-rank $k$-graph, $\G=\Ccat^\times$, and
$A=C^*(\G)$.  We compare the universal algebras of $X(\Ccat)$ with the full
groupoid algebras attached to the left-cancellative category $\Ccat$.

\subsection{Finite alignment and Nica covariance}

For $a\in\Ccat$, write $a\Ccat$ for its principal right ideal.  Two arrows
$a$ and $b$ are equivalent, written $a\approx b$, when $b=ag$ for an
invertible arrow $g$; equivalently, $a\Ccat=b\Ccat$.  An arrow
$\gamma\in a\Ccat\cap b\Ccat$ is a \emph{minimal common extension} of $a$
and $b$ if $\gamma\in\eta\Ccat$ for another common extension $\eta$ implies
$\eta\approx\gamma$.  The category is \emph{finitely aligned} if every
intersection $a\Ccat\cap b\Ccat$ is empty or is a finite union of principal
right ideals.  In that case, the minimal common extensions have finitely many
$\approx$-classes
\cite[Definitions~3.1--3.2 and Lemma~3.3]{Spielberg2020}; let
$\MCE_{\G}(a,b)$ denote a set of representatives of those classes.

The grading determines the degree of every minimal common extension.

\begin{lemma}
\label{lem:mce-degree}
Let $a\in\Ccat^{\bm m}$ and $b\in\Ccat^{\bm n}$.  A common extension of
$a$ and $b$ is minimal if and only if its degree is
$\bm m\vee\bm n$.  Consequently, orbit-finiteness implies finite alignment.
\end{lemma}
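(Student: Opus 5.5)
The plan is to first show that every common extension of $a$ and $b$ has an initial segment of degree $\bm m\vee\bm n$ that is itself a common extension, and then to read off both directions of the equivalence from that fact.

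Let $\gamma=ax=by$ be a common extension, so $\ell(\gamma)\ge\bm m\vee\bm n$ in the coordinatewise order. Write $\bm p=\bm m\vee\bm n$ and $\ell(\gamma)=\bm p+\bm q$. Weak factorization gives $\gamma=\eta z$ with $\ell(\eta)=\bm p$ and $\ell(z)=\bm q$.

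The claim is that $\eta$ is a common extension of $a$ and $b$. To see this, factor $\eta=a'w$ with $\ell(a')=\bm m$, again by weak factorization. Then $\gamma=a'(wz)=a\,x$ are two factorizations of $\gamma$ at degrees $(\bm m,\ell(\gamma)-\bm m)$. Weak factorization supplies $g\in\G$ with $a'=ag$, so $\eta=a(gw)\in a\Ccat$. The same argument gives $\eta\in b\Ccat$. Hence every common extension lies in $\eta\Ccat$ for some common extension $\eta$ of degree exactly $\bm p$.

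Suppose first that $\gamma$ is minimal. Then $\gamma\in\eta\Ccat$ with $\eta$ a common extension, so minimality gives $\eta\approx\gamma$. Since $\approx$ preserves degree ($\G$ has degree $\bm 0$), we get $\ell(\gamma)=\bm p$.

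Conversely, let $\ell(\gamma)=\bm p$, and suppose $\gamma\in\eta\Ccat$ for a common extension $\eta$, say $\gamma=\eta c$. Then $\ell(\eta)\ge\bm p$ because $\eta$ extends both $a$ and $b$. Also $\ell(\eta)\le\ell(\gamma)=\bm p$. So $\ell(c)=\bm 0$, which makes $c$ invertible, and therefore $\eta\approx\gamma$. Thus $\gamma$ is minimal.

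For the consequence, assume orbit-finiteness. The argument above shows that $a\Ccat\cap b\Ccat$ is the union of $\eta\Ccat$ over the common extensions $\eta$ of degree $\bm p$; the reverse inclusion is clear since $\eta\Ccat\subseteq a\Ccat\cap b\Ccat$. Each such $\eta$ lies in $\Ccat^{\bm p}$, and $\eta\Ccat$ depends only on the right $\G$-orbit of $\eta$. If $\bm p\ne\bm 0$, orbit-finiteness leaves only finitely many distinct ideals $\eta\Ccat$. If $\bm p=\bm 0$, then $a,b\in\G$ and the intersection is empty or equal to $\Ccat$ restricted to a common range, which is a single principal ideal. In every case the intersection is empty or a finite union of principal right ideals, so $\Ccat$ is finitely aligned.

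I expect the main obstacle to be the claim that the degree-$\bm p$ prefix $\eta$ of $\gamma$ actually lies in both $a\Ccat$ and $b\Ccat$. This is the only point where weak factorization is used non-trivially, and it has to be applied twice: once to produce a factorization of $\eta$, and once to compare the two resulting factorizations of $\gamma$.
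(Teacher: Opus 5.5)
Your proposal is correct and follows the paper's proof. You show that the degree-$(\bm m\vee\bm n)$ prefix of any common extension is again a common extension, read off both directions of the equivalence from degree comparisons, and cover $a\Ccat\cap b\Ccat$ by the finitely many orbit-classes of such prefixes, treating the degree-zero case separately. The only difference is cosmetic: you factor the prefix as $\eta=a'w$ and compare at degrees $(\bm m,\ell(\gamma)-\bm m)$, whereas the paper factors the cofactor $x=x_1x_2$ and compares $\eta\zeta=(ax_1)x_2$ at degrees $(\bm m\vee\bm n,\ell(\gamma)-(\bm m\vee\bm n))$.
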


\begin{proof}
Every common extension has degree at least $\bm m\vee\bm n$.  Let
$\gamma$ be one, put $\bm l=\bm m\vee\bm n$, and use weak factorization to
write $\gamma=\eta\zeta$ with $\ell(\eta)=\bm l$.  The prefix $\eta$ is
also a common extension.  If $\gamma=ax$, factor $x=x_1x_2$ with
$\ell(x_1)=\bm l-\bm m$.  Comparing
$\eta\zeta=(ax_1)x_2$ at degrees $\bm l$ and
$\ell(\gamma)-\bm l$ gives an invertible $g$ such that
$\eta=(ax_1)g$.  Thus $\eta\in a\Ccat$, and the same argument with $b$
gives $\eta\in b\Ccat$.  If $\gamma$ is minimal, then
$\eta\approx\gamma$, so their degrees agree and
$\ell(\gamma)=\bm l$.

Conversely, suppose $\ell(\gamma)=\bm l$ and
$\gamma\in\eta\Ccat$ for a common extension $\eta$.  The two degree
inequalities force $\ell(\eta)=\bm l$, so the factor from $\eta$ to
$\gamma$ has degree zero and is invertible.  Hence
$\eta\approx\gamma$.

If $\bm l=\bm 0$, then $a$ and $b$ are invertible, and their principal right
ideals are either disjoint or equal.  Suppose therefore that
$\bm l\ne\bm 0$ and that $\Ccat$ is orbit-finite.  The minimal common extensions of $a$ and $b$
belong to the single fiber $\Ccat^{\bm l}$ and therefore have only finitely
many right $\G$-orbits.  Every common extension has a prefix of degree
$\bm l$ that is again common by the first paragraph, so those finitely many
principal ideals cover $a\Ccat\cap b\Ccat$.
\end{proof}

For $\bm p\leq\bm l$, write
\[
 \iota_{\bm p}^{\bm l}(S)
 =U_{\bm p,\bm l-\bm p}
       (S\otimes 1)U_{\bm p,\bm l-\bm p}^*,
 \qquad S\in\adjointables(X_{\bm p}).
\]

\begin{proposition}
\label{prop:finite-alignment-compacts}
If $\Ccat$ is finitely aligned, then $X(\Ccat)$ is compactly aligned.  More
precisely, let $a,b\in\Ccat^{\bm p}$ with $s(a)=s(b)$ and let
$c,d\in\Ccat^{\bm q}$ with $s(c)=s(d)$.  For each
$\gamma\in\MCE_{\G}(b,c)$, write uniquely
\[
                   \gamma=b\beta_\gamma=c\delta_\gamma.
\]
Then, with $\bm l=\bm p\vee\bm q$,
\begin{equation}
\label{eq:rank-one-nica-product}
 \iota_{\bm p}^{\bm l}(\Theta_{\xi_a,\xi_b})
 \iota_{\bm q}^{\bm l}(\Theta_{\xi_c,\xi_d})
 =\sum_{\gamma\in\MCE_{\G}(b,c)}
   \Theta_{\xi_{a\beta_\gamma},\xi_{d\delta_\gamma}}.
\end{equation}
The sum is zero when there is no common extension.
\end{proposition}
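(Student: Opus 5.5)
The plan is to check \eqref{eq:rank-one-nica-product} on the spanning vectors $\xi_e$, $e\in\Ccat^{\bm l}$, and then obtain compact alignment by density.

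\textbf{Amplified rank-one operators.} I first compute $\iota_{\bm p}^{\bm l}(\Theta_{\xi_a,\xi_b})\xi_e$. By weak factorization, $e=xy$ with $\ell(x)=\bm p$, and by Theorem~\ref{thm:linearization}, $U^*\xi_e=\xi_x\otimes\xi_y$. Then
\[
\Theta_{\xi_a,\xi_b}\xi_x=\xi_a\ip{\xi_b}{\xi_x}=\xi_{ag}
\]
when $x=bg$, and $0$ when $x\notin b\G$. Reassembling gives the value $\xi_{ab^{-1}e}$, which I define as $\xi_{agy}$ when $e\in b\Ccat$, writing $e=bz$ with $z$ unique by left cancellation; the value is $0$ otherwise. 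To see that $e\in b\Ccat$ exactly when $x\in b\G$, factor $z$ at degrees $\bm 0$ and $\bm l-\bm p$ (trivially) and compare $bz$ with $xy$ using weak factorization at degrees $\bm p,\bm l-\bm p$. This shows that $\iota_{\bm p}^{\bm l}(\Theta_{\xi_a,\xi_b})$ is the partial map $bz\mapsto az$ on elementary vectors, and is zero off $b\Ccat$. The same holds for $c,d$: the operator sends $dw\mapsto cw$.

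\textbf{Composition.} The product on the left of \eqref{eq:rank-one-nica-product} sends $\xi_{dw}$ to $\xi_{az}$ exactly when $cw=bz$, with $\ell(cw)=\bm l$. By Lemma~\ref{lem:mce-degree}, $cw$ is a minimal common extension of $b$ and $c$. It is therefore $\approx$-equivalent to a unique $\gamma\in\MCE_{\G}(b,c)$, say $cw=\gamma h$. Left cancellation then gives $w=\delta_\gamma h$ and $z=\beta_\gamma h$, so the product sends $\xi_{d\delta_\gamma h}\mapsto\xi_{a\beta_\gamma h}$. This is precisely the action of $\Theta_{\xi_{a\beta_\gamma},\xi_{d\delta_\gamma}}$ on $\xi_{d\delta_\gamma h}$, since $\ip{\xi_{d\delta_\gamma}}{\xi_{d\delta_\gamma h}}=u_h$. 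To finish, I must show that the summands for distinct $\gamma$ have orthogonal supports and that no other vector $\xi_e$ is hit. If $\xi_{d\delta_\gamma}$ and $\xi_{d\delta_{\gamma'}}$ share a right orbit, then $\delta_\gamma\approx\delta_{\gamma'}$ by left cancellation, hence $\gamma\approx\gamma'$, hence $\gamma=\gamma'$. Vectors $\xi_e$ with $e\notin d\Ccat$, or with $e=dw$ and $cw\notin b\Ccat$, are killed by both sides. Here I also need $s(a)=s(b)$ and $s(c)=s(d)$ so that the arrows $a\beta_\gamma$ and $d\delta_\gamma$ are defined. Since both sides are bounded and right $A$-linear, agreement on the $\xi_e$ gives equality.

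\textbf{Compact alignment.} Rank-one operators $\Theta_{\xi_a,\xi_b}$ with $a,b\in\Ccat^{\bm p}$ span a dense subspace of $\compacts(X_{\bm p})$. Operators with $s(a)\neq s(b)$ vanish: $\Theta_{\xi_a,\xi_b}=\Theta_{\xi_a p_{s(a)},\xi_b p_{s(b)}}$, which is $0$ since $p_{s(a)}p_{s(b)}=0$. By finite alignment the sum in \eqref{eq:rank-one-nica-product} is finite and hence compact. Bilinearity and continuity then extend the conclusion to all $S\in\compacts(X_{\bm p})$ and $T\in\compacts(X_{\bm q})$. The degree-zero case $\bm l=\bm 0$ is handled as in Corollary~\ref{cor:compact-alignment}.

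I expect the main obstacle to be the bookkeeping in the first step: showing that the amplification acts by the prefix-replacement formula, which means checking carefully that membership of $e$ in $b\Ccat$ is detected by the degree-$\bm p$ prefix under weak factorization.
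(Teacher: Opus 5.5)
Your proposal is correct and follows essentially the paper's route. You describe each amplified rank-one operator as prefix replacement $\xi_{bz}\mapsto\xi_{az}$, match $\xi_{d\delta_\gamma h}\mapsto\xi_{a\beta_\gamma h}$ summand by summand using Lemma~\ref{lem:mce-degree} and left cancellation, and conclude compact alignment by density of the operators $\Theta_{\xi_a,\xi_b}$. You also supply the weak-factorization check behind the prefix-replacement description, which the paper only asserts. The paper's remark that the formula does not depend on the choice of representatives is not needed in your version, since your argument works for any fixed choice.
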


\begin{proof}
The source assumptions make the displayed rank-one operators and all the
products $a\beta_\gamma$ and $d\delta_\gamma$ nonzero.  Rank-one operators
with unequal source projections are zero, so operators of the displayed form
span the compact operators.

We check \eqref{eq:rank-one-nica-product} on the elementary spanning vectors
of $X_{\bm l}$.  The amplification of
$\Theta_{\xi_c,\xi_d}$ maps an extension
$\xi_{d\varepsilon}$ to $\xi_{c\varepsilon}$ and vanishes off the right
$\G$-orbits of such extensions.  The amplification of
$\Theta_{\xi_a,\xi_b}$ can then be nonzero exactly when
$c\varepsilon$ is a common extension of $b$ and $c$.  Its degree is
$\bm l$, so Lemma~\ref{lem:mce-degree} gives unique
$\gamma\in\MCE_{\G}(b,c)$ and $h\in\G$ such that
\[
                    c\varepsilon=\gamma h
                    =b\beta_\gamma h.
\]
Left cancellation in $c\varepsilon=c\delta_\gamma h$ gives
$\varepsilon=\delta_\gamma h$.  Thus the left side maps
$\xi_{d\delta_\gamma h}$ to $\xi_{a\beta_\gamma h}$, and it vanishes on
the orthogonal complement of these right orbits.  The corresponding summand
on the right has exactly the same action because
\[
 \Theta_{\xi_{a\beta_\gamma},\xi_{d\delta_\gamma}}
       (\xi_{d\delta_\gamma h})
 =\xi_{a\beta_\gamma}u_h.
\]

Changing the representative from $\gamma$ to $\gamma g$ replaces both
vectors in the rank-one operator by right multiplication by $u_g$.  Since
their common right support is $p_{r(g)}$,
\[
 \Theta_{\xi_{a\beta_\gamma}u_g,
          \xi_{d\delta_\gamma}u_g}
 =\Theta_{\xi_{a\beta_\gamma},\xi_{d\delta_\gamma}}.
\]
Hence the formula is independent of the transversal.  Its right side is a
finite sum of compact operators.  Approximation by finite sums of rank-one
operators proves compact alignment.
\end{proof}

For a Toeplitz representation $\psi$ of $X(\Ccat)$, let
$\psi^{(\bm n)}(\Theta_{x,y})=\psi_{\bm n}(x)\psi_{\bm n}(y)^*$.  It is
\emph{Nica covariant} when
\[
 \psi^{(\bm p)}(S)\psi^{(\bm q)}(T)
 =\psi^{(\bm p\vee\bm q)}
   \left(\iota_{\bm p}^{\bm p\vee\bm q}(S)
         \iota_{\bm q}^{\bm p\vee\bm q}(T)\right)
\]
for compact $S$ and $T$.  Write $\NT_{X(\Ccat)}$ for the universal
Nica--Toeplitz algebra.

Spielberg associates to every left-cancellative small category a groupoid
$\mathscr G_2(\Ccat)$ and defines
\[
             \Tcal(\Ccat)=C^*(\mathscr G_2(\Ccat))
\]
\cite[Definition~5.13]{Spielberg2020}.  The use of $\mathscr G_2$, rather
than the germ groupoid $\mathscr G_1$, retains the invertible arrows.

\begin{theorem}
\label{thm:nica-spielberg}
If $\Ccat$ is finitely aligned, there is a canonical isomorphism
\begin{equation}
\label{eq:nica-spielberg-isomorphism}
       \NT_{X(\Ccat)}\cong C^*(\mathscr G_2(\Ccat))
\end{equation}
that sends the universal image of $\xi_c$ to Spielberg's arrow generator
$T_c$ for every $c\in\Ccat$.
\end{theorem}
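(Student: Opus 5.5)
The plan is to build mutually inverse homomorphisms from universal properties on both sides, using Spielberg's presentation of $C^*(\mathscr G_2(\Ccat))$ for finitely aligned left-cancellative categories \cite[Theorem~9.7]{Spielberg2020}. That presentation describes $\Tcal(\Ccat)$ as the universal $C^*$-algebra generated by partial isometries $\{T_c:c\in\Ccat\}$ satisfying three relations:
\[
 T_c^*T_c=T_{s(c)},\qquad T_cT_d=T_{cd}\ (s(c)=r(d)),
\]
and the finite-alignment relation
\[
 T_cT_c^*T_dT_d^*=\bigvee_{\gamma\in\MCE(c,d)}T_\gamma T_\gamma^*,
\]
where the join is over representatives of $\approx$-classes. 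In a generalized higher-rank graph, Lemma~\ref{lem:mce-degree} puts every minimal common extension in degree $\bm m\vee\bm n$. Distinct $\approx$-classes there have orthogonal range projections: $\Theta_{\xi_\gamma,\xi_\gamma}\Theta_{\xi_{\gamma'},\xi_{\gamma'}}=0$ when $\gamma'\notin\gamma\G$. So the join is an honest finite sum.

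\textbf{From $\NT_{X(\Ccat)}$ to $\Tcal(\Ccat)$.} The plan is to define a Toeplitz representation by $\psi_{\bm n}(\xi_c)=T_c$, so that $\psi_{\bm 0}(u_g)=T_g$.
\begin{enumerate}[(1)]
\item Check that $g\mapsto T_g$ is a representation of $\G$. For invertible $g$, both $T_g^*T_g$ and $T_gT_g^*$ are vertex projections, so $T_g^*=T_{g^{-1}}$. This gives $\psi_{\bm 0}$ on $A=C^*(\G)$.
\item Check that $\psi_{\bm n}$ is well defined and isometric on $X_{\bm n}^{\mathrm{alg}}$. This reduces to $T_c^*T_d=T_g$ when $d=cg$, and $T_c^*T_d=0$ when $\ell(c)=\ell(d)$ but $d\notin c\G$. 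The zero case follows from the MCE relation: there is no common extension of degree $\ell(c)$ other than elements of $c\G$, so $T_cT_c^*T_dT_d^*=0$.
\item Read off the bimodule and multiplicativity axioms from the category relations, in the form of Proposition~\ref{prop:representation-relations}.
\item Verify Nica covariance on the spanning rank-one operators. Proposition~\ref{prop:finite-alignment-compacts} gives the right-hand side as $\sum_\gamma T_{a\beta_\gamma}T_{d\delta_\gamma}^*$. The left-hand side is $T_aT_b^*T_cT_d^*$. Insert the MCE relation via $T_b^*T_c=T_b^*(T_bT_b^*T_cT_c^*)T_c$ and use $T_b^*T_\gamma=T_{\beta_\gamma}$, $T_c^*T_\gamma=T_{\delta_\gamma}$; the two sides then agree.
\end{enumerate}

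\textbf{From $\Tcal(\Ccat)$ to $\NT_{X(\Ccat)}$.} Conversely, set $S_c=j_{\ell(c)}(\xi_c)$ in $\NT_{X(\Ccat)}$.
\begin{enumerate}[(1)]
\item Proposition~\ref{prop:representation-relations} gives $S_c^*S_c=S_{s(c)}$ and $S_cS_d=S_{cd}$.
\item The MCE relation is Nica covariance applied to $\Theta_{\xi_c,\xi_c}$ and $\Theta_{\xi_d,\xi_d}$. Formula \eqref{eq:rank-one-nica-product} with $a=b=c$ and $c=d=d$ yields $\sum_\gamma S_\gamma S_\gamma^*$, since $c\beta_\gamma=\gamma=d\delta_\gamma$.
\end{enumerate}
The two maps agree with the identity on generators $\xi_c\leftrightarrow T_c$. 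The $\xi_c$ generate $\NT_{X(\Ccat)}$, because they span dense subspaces of all fibers. The $T_c$ generate $\Tcal(\Ccat)$. Hence the two maps are mutually inverse, which gives the canonical isomorphism \eqref{eq:nica-spielberg-isomorphism}.

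\textbf{Main obstacle.} The delicate step is matching Spielberg's presentation exactly. His Theorem~9.7 is phrased with relations over all finite sets of arrows, formulated through the ideal lattice or via $\mathscr G_2$ germs. I would first reduce his relations to the pairwise join relation above. Finite alignment lets joins be computed iteratively, and Lemma~\ref{lem:mce-degree} makes the relevant projections commute. I would also check that his unit and vertex conventions yield nondegeneracy of $\psi_{\bm 0}$, using the approximate unit of vertex projections. If his presentation instead includes the inclusion–exclusion form, I expect it to follow from orthogonality of distinct-class range projections in a fixed degree.
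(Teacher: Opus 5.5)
Your proposal is correct and is essentially the paper's proof. The map $C^*(\mathscr G_2(\Ccat))\to\NT_{X(\Ccat)}$ comes from specializing Nica covariance and \eqref{eq:rank-one-nica-product} to range projections. The reverse map comes from checking that $\psi_{\bm n}(\xi_c)=T_c$ is a Toeplitz representation, with $T_c^*T_d=0$ for distinct equal-degree orbits because $c\Ccat\cap d\Ccat=\emptyset$, and that it is Nica covariant via $T_aT_b^*T_cT_d^*=T_aT_b^*(T_bT_b^*T_cT_c^*)T_cT_d^*$; your extra care about orthogonality turning the join into a sum, and about matching Spielberg's Theorem~9.7 presentation, only makes explicit what the paper asserts.
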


\begin{proof}
Let $j$ be the universal Nica-covariant representation and put
$S_c=j_{\ell(c)}(\xi_c)$.  Proposition
\ref{prop:representation-relations} gives multiplication and source
relations.  If $a\in\Ccat^{\bm p}$ and $b\in\Ccat^{\bm q}$, Nica
covariance and \eqref{eq:rank-one-nica-product} give
\begin{equation}
\label{eq:range-projection-intersection}
 S_aS_a^*S_bS_b^*
   =\sum_{\gamma\in\MCE_{\G}(a,b)}S_\gamma S_\gamma^*.
\end{equation}
The summands are mutually orthogonal: two distinct representatives have
zero inner product by \eqref{eq:intrinsic-inner-product}.  Thus the sum in
\eqref{eq:range-projection-intersection} is the join of the range
projections.  These are precisely the relations in
\cite[Theorem~9.7]{Spielberg2020}, so there is a homomorphism
$C^*(\mathscr G_2(\Ccat))\to\NT_{X(\Ccat)}$ taking $T_c$ to $S_c$.

Conversely, let $\{T_c:c\in\Ccat\}$ be Spielberg's universal family.  For
$g\in\G$, one has $g\Ccat=r(g)\Ccat$, so the range-projection relation gives
$T_gT_g^*=T_{r(g)}$.  The source and inverse relations then give
\[
 T_{g^{-1}}=(T_g^*T_g)T_{g^{-1}}
 =T_g^*(T_gT_{g^{-1}})=T_g^*T_{r(g)}=T_g^*.
\]
Thus the $T_g$ extend to a nondegenerate representation of
$A=C^*(\G)$.  Set
\[
                         \psi_{\bm n}(\xi_c)=T_c.
\]
The module and multiplication identities follow from category
multiplication.  If $c,d$ have the same degree and $d=cg$, then
\[
                         T_c^*T_d=T_g.
\]
If $c\G\ne d\G$, then $c\Ccat\cap d\Ccat$ is empty: a common extension
would, by weak factorization at their common degree, give $d=cg$ for an
invertible $g$.  Spielberg's range-projection relation therefore gives
$T_cT_c^*T_dT_d^*=0$, and hence
\[
 T_c^*T_d=T_c^*(T_cT_c^*T_dT_d^*)T_d=0.
\]
Hence $\psi$ preserves the inner products and is a Toeplitz
representation of $X(\Ccat)$.

For Nica covariance it suffices to use rank-one operators as in
Proposition~\ref{prop:finite-alignment-compacts}.  Spielberg's
range-projection relation and
$\gamma=b\beta_\gamma=c\delta_\gamma$ give
\begin{align*}
 T_aT_b^*T_cT_d^*
 &=T_aT_b^*(T_bT_b^*T_cT_c^*)T_cT_d^*\\
 &=\sum_{\gamma\in\MCE_{\G}(b,c)}
       T_{a\beta_\gamma}T_{d\delta_\gamma}^*.
\end{align*}
By \eqref{eq:rank-one-nica-product}, this is the Nica relation.  Thus
$\psi$ induces a homomorphism in the reverse direction.  The two maps fix
every arrow generator and are inverse.
\end{proof}

\subsection{The boundary quotient}

Call $\Ccat$ \emph{row-finite modulo $\G$} if
$v\Ccat^{\bm n}/\G$ is finite for every $v\in\Ccat^0$ and
$\bm n\in\Nat^k$.  It has \emph{no sources} if
$v\Ccat^{\bm n}$ is nonempty for every such $v$ and $\bm n$.  Choose a
finite right-orbit transversal
\[
                F(v,\bm n)\subseteq v\Ccat^{\bm n}.
\]
The proof of Lemma~\ref{lem:mce-degree}, applied at the common range of two
arrows, shows that row-finiteness modulo $\G$ implies finite alignment.

\begin{lemma}
\label{lem:compact-left-action-formula}
If $\Ccat$ is row-finite modulo $\G$, then every left action is by compact
operators.  For $g\in\G$ and nonzero $\bm n$,
\begin{equation}
\label{eq:compact-left-action-formula}
 \varphi_{\bm n}(u_g)
   =\sum_{x\in F(s(g),\bm n)}
       \Theta_{\xi_{gx},\xi_x}.
\end{equation}
\end{lemma}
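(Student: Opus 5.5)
I will verify the formula \eqref{eq:compact-left-action-formula} on the elementary spanning vectors $\xi_c$, $c\in\Ccat^{\bm n}$, and then deduce compactness. Since $F(s(g),\bm n)$ is finite by row-finiteness modulo $\G$, the right side of \eqref{eq:compact-left-action-formula} is a finite sum of rank-one operators and hence compact. Every element of $A$ is a norm limit of finite linear combinations of the $u_g$, and $\compacts(X_{\bm n})$ is closed. So the formula gives $\varphi_{\bm n}(A)\subseteq\compacts(X_{\bm n})$ for nonzero $\bm n$. For $\bm n=\bm 0$ the left action is multiplication on $A=\compacts(A_A)$, so the first assertion holds there as well.

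For the formula itself, the first step is to compute $\Theta_{\xi_{gx},\xi_x}\xi_c=\xi_{gx}\ip{\xi_x}{\xi_c}_A$ using \eqref{eq:intrinsic-inner-product}. This vector is $\xi_{gx}u_h=\xi_{gxh}$ when $c=xh$ for some $h\in\G$, and zero when $c\notin x\G$. Note that $gx$ is defined because $r(x)=s(g)$.

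Next I split into cases on $r(c)$. If $r(c)=s(g)$, then $c\in s(g)\Ccat^{\bm n}$, so $c$ lies in the right orbit of exactly one $x\in F(s(g),\bm n)$. That $F$ is a right-orbit transversal of $s(g)\Ccat^{\bm n}$ uses only that right multiplication by $\G$ preserves ranges. The element $h$ with $c=xh$ is unique by freeness, which is Lemma~\ref{lem:right-action-free}. Hence exactly one summand survives, and the sum gives $\xi_{gxh}=\xi_{gc}=L_g\xi_c$. If $r(c)\ne s(g)$, then $c$ lies in no orbit $x\G$ with $r(x)=s(g)$, so every summand vanishes, matching $L_g\xi_c=0$.

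Both sides are bounded adjointable operators and they agree on a dense spanning set, so they are equal. The only point needing care is that the transversal $F(s(g),\bm n)$ covers exactly the orbits with range $s(g)$, each exactly once. This is immediate from the definition, so I expect no real obstacle. The hypothesis $\bm n\neq\bm 0$ is used only so that the formula is stated in $X_{\bm n}$ coordinates; the computation is the same in degree zero.
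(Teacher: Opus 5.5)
Your proposal is correct and follows the paper's route. You check the formula on elementary vectors via $\Theta_{\xi_{gx},\xi_x}\xi_{xh}=\xi_{gxh}$, use finiteness of $F(s(g),\bm n)$, density of the groupoid generators and closedness of $\compacts(X_{\bm n})$ for compactness, and handle degree zero by $\compacts(A_A)=A$; you also spell out the details the paper leaves implicit, namely that $c$ lies in exactly one transversal orbit and that every summand vanishes when $r(c)\ne s(g)$.
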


\begin{proof}
On the right orbit of $x$, the corresponding summand maps
$\xi_{xh}$ to $\xi_{gxh}$, and it vanishes on every other orbit.  The sum is
finite and therefore equals the left action of $u_g$.  The groupoid
generators span a dense subalgebra of $A$, and the compact operators are
closed.  In degree zero, the assertion is the standard identity
$\compacts(A_A)=A$.
\end{proof}

Let $\mathcal O^{\mathrm F}_{X(\Ccat)}$ denote Fowler's Cuntz--Pimsner
algebra of the product system.  Under the hypothesis of Lemma
\ref{lem:compact-left-action-formula}, its covariance relation is
\begin{equation}
\label{eq:fowler-covariance}
 \psi^{(\bm n)}(\varphi_{\bm n}(a))=\psi_{\bm 0}(a),
 \qquad a\in A,\quad \bm n\ne\bm 0,
\end{equation}
as in \cite[Definition~2.5 and Proposition~2.9]{Fowler2002}.

For a countable finitely aligned category, Spielberg defines the boundary
algebra
\[
 \mathcal O_{\mathrm{Sp}}(\Ccat)
   =C^*(\mathscr G_2(\Ccat)|_{\partial\Ccat}).
\]
It is the universal quotient of $\Tcal(\Ccat)$ satisfying
\begin{equation}
\label{eq:spielberg-exhaustive-relation}
 T_v=\bigvee_{\alpha\in E}T_\alpha T_\alpha^*
\end{equation}
for every finite exhaustive set $E\subseteq v\Ccat$
\cite[Definitions~10.13--10.14 and Theorem~10.15]{Spielberg2020}.  Here
$E$ is exhaustive when each $c\in v\Ccat$ has a common extension with some
$\alpha\in E$.

\begin{theorem}
\label{thm:fowler-spielberg-boundary}
Suppose that $\Ccat$ is row-finite modulo $\G$ and has no sources.  Then there
is a canonical isomorphism
\begin{equation}
\label{eq:fowler-spielberg-boundary}
 \mathcal O^{\mathrm F}_{X(\Ccat)}
     \cong C^*(\mathscr G_2(\Ccat)|_{\partial\Ccat})
\end{equation}
that identifies the images of every $\xi_c$ and $T_c$.
\end{theorem}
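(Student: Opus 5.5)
Both algebras are quotients of $\NT_{X(\Ccat)}\cong C^*(\mathscr G_2(\Ccat))$, via Theorem~\ref{thm:nica-spielberg}; row-finiteness modulo $\G$ gives finite alignment. The plan is to show that the two kernels coincide by matching the defining relations. One caveat: Fowler's algebra is a quotient of the Toeplitz algebra, not a priori of the Nica--Toeplitz algebra. So the first step is to check that every Fowler-covariant representation is automatically Nica covariant. Given \eqref{eq:fowler-covariance}, Fowler's covariance and compact alignment, this follows from \cite[Proposition~5.4]{Fowler2002}: covariance together with compactness of the left actions (Lemma~\ref{lem:compact-left-action-formula}) forces Nica covariance. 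Alternatively one can use the product formula \eqref{eq:rank-one-nica-product} directly, expanding $\psi_{\bm 0}(p_{s(b)})=\psi^{(\bm l-\bm p)}(\varphi(p_{s(b)}))$. Hence $\mathcal O^{\mathrm F}_{X(\Ccat)}$ is the quotient of $\NT_{X(\Ccat)}$ by \eqref{eq:fowler-covariance}, and $\mathcal O_{\mathrm{Sp}}(\Ccat)$ is the quotient of the same algebra by \eqref{eq:spielberg-exhaustive-relation}.

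\emph{Spielberg implies Fowler.} By \eqref{eq:compact-left-action-formula} and linearity, it suffices to verify $\sum_{x\in F(v,\bm n)}T_{gx}T_x^*=T_g$ for $s(g)=v$. Since $T_{gx}=T_gT_x$, this reduces to $\sum_{x\in F(v,\bm n)}T_xT_x^*=T_v$. The set $F(v,\bm n)$ is finite. It is exhaustive: given $c\in v\Ccat$, the no-sources hypothesis lets us extend $c$ to an arrow of degree $\ge\bm n$, and weak factorization puts that arrow in $x\Ccat$ for some orbit representative $x$. Distinct representatives have orthogonal ranges by \eqref{eq:intrinsic-inner-product}, so the join equals the sum, and \eqref{eq:spielberg-exhaustive-relation} gives the identity.

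\emph{Fowler implies Spielberg.} This is the main obstacle, because an arbitrary finite exhaustive $E\subseteq v\Ccat$ has mixed degrees. The Fowler relation gives $T_v=\sum_{x\in F(v,\bm n)}T_xT_x^*$ for every $\bm n$. Take $\bm n=\bigvee_{\alpha\in E}\ell(\alpha)$. For each $x\in F(v,\bm n)$, exhaustiveness yields $\alpha\in E$ and a common extension of $x$ and $\alpha$. By the argument of Lemma~\ref{lem:mce-degree}, a minimal common extension has degree $\bm n\vee\ell(\alpha)=\bm n$, so it lies in $x\G$, and hence $x\in\alpha\Ccat$ up to an invertible. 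Therefore $T_xT_x^*\le T_\alpha T_\alpha^*$. Summing over $x$ gives $T_v\le\bigvee_{\alpha\in E}T_\alpha T_\alpha^*\le T_v$, which is \eqref{eq:spielberg-exhaustive-relation}.

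With both implications in hand, the two ideals in $C^*(\mathscr G_2(\Ccat))$ coincide. The isomorphism of Theorem~\ref{thm:nica-spielberg} descends to \eqref{eq:fowler-spielberg-boundary}, matching the image of $\xi_c$ with that of $T_c$. The delicate points are the automatic Nica covariance and the degree bookkeeping in the exhaustive-set step; the latter rests on left cancellation and weak factorization exactly as in Lemma~\ref{lem:mce-degree}.
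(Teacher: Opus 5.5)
Your proposal is correct and follows essentially the same route as the paper: automatic Nica covariance via Fowler's Proposition~5.4, then matching the fixed-degree relations $T_v=\sum_{x\in F(v,\bm n)}T_xT_x^*$ with Spielberg's exhaustive-set relations in both directions. In each direction you use the same key step as the paper, namely exhaustiveness of $F(v,\bm n)$ via no sources, and the degree-$\bm n$ prefix argument giving $x\in\alpha\Ccat$. The only cosmetic difference is that your choice $\bm n=\bigvee_{\alpha\in E}\ell(\alpha)$ may be $\bm 0$; there the needed relation $T_v=T_vT_v^*$ holds trivially, whereas the paper simply takes a nonzero $\bm n$ dominating every $\ell(\alpha)$.
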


\begin{proof}
Let $\psi$ be a Toeplitz representation and use the notation $U_g,S_c$ from
Proposition~\ref{prop:representation-relations}; put
$P_v=U_v$ and $P_c=S_cS_c^*$.  By
\eqref{eq:compact-left-action-formula}, Fowler covariance implies
\begin{equation}
\label{eq:fixed-degree-cuntz-relation}
                   P_v=\sum_{x\in F(v,\bm n)}P_x
\end{equation}
for every vertex $v$ and nonzero degree $\bm n$.  Conversely, these vertex
relations imply \eqref{eq:fowler-covariance} on every groupoid generator:
\begin{align*}
 U_g
 &=U_gP_{s(g)}
   =\sum_{x\in F(s(g),\bm n)}S_{gx}S_x^*\\
 &=\psi^{(\bm n)}(\varphi_{\bm n}(u_g)).
\end{align*}
They therefore imply Fowler covariance on all of $A$.

The fixed-degree relations are equivalent to Spielberg's finite
exhaustive-set relations.  The set $F(v,\bm n)$ is finite and exhaustive.
Given
$c\in v\Ccat$, put
$\bm q=(\ell(c)\vee\bm n)-\ell(c)$, choose
$d\in s(c)\Ccat^{\bm q}$, and refactor
$cd=ab$ with $\ell(a)=\bm n$.  Writing $a=xg$ with
$x\in F(v,\bm n)$ gives $cd=x(gb)$, so $c$ and $x$ have a common
extension.  Distinct elements of $F(v,\bm n)$ have orthogonal range
projections, so \eqref{eq:fixed-degree-cuntz-relation} is the boundary
relation for this exhaustive set.

Conversely, let $E\subseteq v\Ccat$ be finite and exhaustive, and choose
$\bm n\ne\bm 0$ with $\bm n\geq\ell(\alpha)$ for every $\alpha\in E$.
For $x\in F(v,\bm n)$, choose $\alpha\in E$ having a common extension with
$x$.  Weak factorization at degree $\bm n$ shows that $x\in\alpha\Ccat$:
if $xy=\alpha z$, factor $z=z_1z_2$ with
$\ell(z_1)=\bm n-\ell(\alpha)$ and compare the two degree-$\bm n$
prefixes to obtain $\alpha z_1=xg$ for some $g\in\G$.  Hence
$P_x\leq P_\alpha$.  Relation
\eqref{eq:fixed-degree-cuntz-relation} now gives
\[
 P_v=\sum_{x\in F(v,\bm n)}P_x
     \leq\bigvee_{\alpha\in E}P_\alpha\leq P_v,
\]
which is \eqref{eq:spielberg-exhaustive-relation}.  Thus the fixed-degree
relations and all the finite exhaustive-set relations are equivalent.

Every fiber is essential by Proposition~\ref{prop:left-action}, the semigroup
$\Nat^k$ is directed, and all the left actions are compact.  Fowler's
\cite[Proposition~5.4]{Fowler2002} therefore shows that every
Fowler-covariant representation is automatically Nica covariant.  Theorem
\ref{thm:nica-spielberg}, Spielberg's
\cite[Theorem~10.15]{Spielberg2020}, and the equivalence of covariance
relations just proved give \eqref{eq:fowler-spielberg-boundary}.
\end{proof}

The fixed-degree relation is independent of the transversal: replacing $x$
by $xg$ does not change its range projection, since
\[
             S_{xg}S_{xg}^*=S_xU_gU_g^*S_x^*=S_xS_x^*.
\]

For the Cuntz--Nica--Pimsner quotient, assume in addition that every left
action is injective.

\begin{corollary}
\label{cor:cnp-spielberg-boundary}
Under the hypotheses of Theorem~\ref{thm:fowler-spielberg-boundary}, suppose
also that every left action $\varphi_{\bm n}$ is injective.  Then
\[
 \NOcal_{X(\Ccat)}
   \cong\mathcal O^{\mathrm F}_{X(\Ccat)}
   \cong C^*(\mathscr G_2(\Ccat)|_{\partial\Ccat}).
\]
\end{corollary}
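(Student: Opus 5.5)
The second isomorphism is Theorem~\ref{thm:fowler-spielberg-boundary}, so the task is to show that the Cuntz--Nica--Pimsner algebra and Fowler's Cuntz--Pimsner algebra coincide. The plan is to show that, under the present hypotheses, Cuntz--Nica--Pimsner covariance and Fowler covariance are the same relations on Nica covariant representations. Fowler covariance already forces Nica covariance, by \cite[Proposition~5.4]{Fowler2002} as used in the proof of Theorem~\ref{thm:fowler-spielberg-boundary}. Hence both algebras are quotients of $\NT_{X(\Ccat)}$ by the ideals generated by their respective covariance relations, and it suffices to compare these two ideals.

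\emph{Hypotheses for Sims--Yeend.} The product system $X(\Ccat)$ is compactly aligned by Proposition~\ref{prop:finite-alignment-compacts}, since row-finiteness modulo $\G$ implies finite alignment. Each $\varphi_{\bm n}$ is injective by assumption. Each $\varphi_{\bm n}(A)$ lies in $\compacts(X_{\bm n})$ by Lemma~\ref{lem:compact-left-action-formula}. The semigroup $\Nat^k$ is directed (a lattice order). These are the hypotheses of \cite[Proposition~5.1 and Corollary~5.2]{SimsYeend2010}.

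\emph{CNP covariance equals Fowler covariance.} With injective and compact left actions, every $\widetilde\varphi$-extension into $X_{\bm n}$ is injective. The Sims--Yeend ideals $\mathcal I_{\bm n}$ then reduce to $A$ for $\bm n\ne\bm 0$. Their covariance condition on a Nica covariant $\psi$ then reads, for every finite set $F\subseteq\Nat^k$ and every $\bm s$ large, $\sum_{\bm p\in K}(-1)^{|K|}\psi^{(\bm p)}(\iota_{\bm 0}^{\bm p}(a))=0$. Their Corollary~5.2 identifies this with $\psi^{(\bm n)}(\varphi_{\bm n}(a))=\psi_{\bm 0}(a)$ for all $\bm n$, which is \eqref{eq:fowler-covariance}.

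One inclusion is checked by expanding the alternating sum, each term of which equals $\psi_{\bm 0}(a)$. The other inclusion comes from taking $F=\{\bm n\}$.

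\emph{Conclusion.} The universal CNP-covariant representation and the universal Fowler-covariant Nica representation therefore satisfy the same relations. The identity on generators $j(\xi_c)$ then gives mutually inverse homomorphisms $\NOcal_{X(\Ccat)}\leftrightarrow\mathcal O^{\mathrm F}_{X(\Ccat)}$. Composing with Theorem~\ref{thm:fowler-spielberg-boundary} gives the result.

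The main obstacle is making sure the Sims--Yeend covariance in the cited corollary applies literally: their definition uses $\widetilde\varphi$ and the ideals $\mathcal I_{\bm n}$, and injectivity is needed so that these ideals are all of $A$. This is where the injectivity hypothesis is essential, so I would verify it first.
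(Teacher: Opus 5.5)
Your route is the same as the paper's. You check compact alignment (Proposition~\ref{prop:finite-alignment-compacts}, via finite alignment), compactness and injectivity of the left actions (Lemma~\ref{lem:compact-left-action-formula} plus the hypothesis), and directedness of $\Nat^k$. You then cite Sims--Yeend's Proposition~5.1 and Corollary~5.2 and compose with Theorem~\ref{thm:fowler-spielberg-boundary}. These load-bearing steps are correct, and the paper's proof consists of exactly them.

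Your unpacking of Sims--Yeend, however, is wrong in two places, and you should fix it or delete it.

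\emph{The ideals.} Their ideals are $I_{\bm n}=\bigcap_{\bm 0<\bm p\le\bm n}\ker\varphi_{\bm p}$ for $\bm n\neq\bm 0$, so injectivity makes them $\{0\}$, not $A$. That is what collapses $\widetilde X_{\bm n}=\bigoplus_{\bm p\le\bm n}X_{\bm p}I_{\bm n-\bm p}$ to $X_{\bm n}$. It also reduces $\widetilde\iota_{\bm p}^{\bm s}$ to $\iota_{\bm p}^{\bm s}$ when $\bm p\le\bm s$, and to $0$ otherwise. The ideal that equals $A$ here is Katsura's $\varphi_{\bm n}^{-1}(\compacts(X_{\bm n}))\cap(\ker\varphi_{\bm n})^\perp$, which is probably what you had in mind.

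\emph{The CNP relation.} Their CNP relation is not an alternating sum of terms $\psi^{(\bm p)}(\varphi_{\bm p}(a))$; as you display it, the sign does not even depend on the summation index. The actual condition requires $\sum_{\bm p\in F}\psi^{(\bm p)}(T_{\bm p})=0$ for arbitrary $T_{\bm p}\in\compacts(X_{\bm p})$ such that $\sum_{\bm p\in F}\widetilde\iota_{\bm p}^{\bm s}(T_{\bm p})=0$ for large $\bm s$. Hence ``each term equals $\psi_{\bm 0}(a)$'' does not prove that Fowler covariance implies CNP covariance.

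The real argument for that direction is as follows. Fowler covariance plus compact left actions gives $\psi^{(\bm p)}(T)=\psi^{(\bm s)}(\iota_{\bm p}^{\bm s}(T))$ for $T\in\compacts(X_{\bm p})$ and $\bm s\ge\bm p$. To see this, factor $x=x'a$ in $\Theta_{x,y}$ and replace $\psi_{\bm 0}(a)$ by $\psi^{(\bm s-\bm p)}(\varphi_{\bm s-\bm p}(a))$. The sum then equals $\psi^{(\bm s)}\bigl(\sum_{\bm p\in F}\iota_{\bm p}^{\bm s}(T_{\bm p})\bigr)=0$ for a suitably large $\bm s$.

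For the converse, take $F=\{\bm 0,\bm n\}$ with $T_{\bm 0}=a$ and $T_{\bm n}=-\varphi_{\bm n}(a)$. The amplified sum vanishes for every $\bm s\ge\bm n$, and this set is cofinal because $\Nat^k$ is directed.

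Because you ultimately rely on the citation, your proof stands. The self-contained ``one inclusion/other inclusion'' check, as written, does not establish the equivalence.
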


\begin{proof}
The product system is compactly aligned by Proposition
\ref{prop:finite-alignment-compacts}, and its left actions are compact by
Lemma~\ref{lem:compact-left-action-formula}.  With injectivity,
\cite[Proposition~5.1 and Corollary~5.2]{SimsYeend2010} identifies
Cuntz--Nica--Pimsner covariance with Fowler covariance.  Apply
Theorem~\ref{thm:fowler-spielberg-boundary}.
\end{proof}

The left actions in Proposition~\ref{prop:left-action} need not be injective.
Without injectivity, the cited Sims--Yeend comparison does not apply, and the
Cuntz--Nica--Pimsner quotient requires a separate analysis.
 \section{Compatible coefficient completions}
\label{sec:further-questions}

The construction above uses the full groupoid algebra.  Replacing it by a
quotient requires compatibility with the left actions and multiplication
maps.

Retain the assumptions that $(\Ccat,\ell)$ is countable and left
cancellative, put $\G=\Ccat^\times$, and let $X(\Ccat)$ be the product system
of Theorem~\ref{thm:linearization}.

Let $q\colon C^*(\G)\to B$ be a quotient.  For each degree, the right Hilbert
$B$-module obtained from $X_{\bm n}$ is
\[
                         X_{\bm n}^B=X_{\bm n}\otimes_q B.
\]
Write $X_{\bm n}\ker q$ for the closed span of
$\{xa:x\in X_{\bm n},\ a\in\ker q\}$.  The left action descends through
$q$ precisely when
\begin{equation}
\label{eq:descent-condition}
 \varphi_{\bm n}(\ker q)X_{\bm n}
       \subseteq X_{\bm n}\ker q.
\end{equation}

\begin{definition}
\label{def:compatible-completion}
The quotient $B$ is \emph{$\Ccat$-compatible} if
\eqref{eq:descent-condition} holds in every degree and the algebraic maps
induced by category multiplication extend to correspondence unitaries
\[
 X_{\bm m}^B\otimes_BX_{\bm n}^B
       \longrightarrow X_{\bm m+\bm n}^B
\]
forming a product system.
\end{definition}

The identity quotient is compatible by Theorem~\ref{thm:linearization}.  A
general quotient, including the reduced groupoid algebra, is not compatible
without a descent argument.  In the one-object, rank-one, finite-alphabet
setting, Nekrashevych defines self-similar completions and proves that the
generic-orbit completion $A_\Phi$ is a quotient of every other such completion
\cite[Definition~3.4 and Theorem~5.2]{Nekrashevych2004}.  His proof uses the
specific boundary dynamics of a countable, level-transitive self-similar group
action.  It does not by itself produce a minimal $\Ccat$-compatible quotient
in several degrees.

\begin{question}
\label{ques:minimal-completion}
For which generalized higher-rank graphs does $C^*(\G)$ have a smallest
$\Ccat$-compatible quotient?  Can it be described by representations on a
boundary path space, and when is the reduced groupoid algebra compatible?
\end{question}
 \section{Lean formalization}
\label{sec:lean-formalization}

An accompanying Lean~4 development verifies the categorical and combinatorial
part of the paper.  It compiles with Lean~4.28.0 and Mathlib~v4.28.0.  A
transitive audit of the imported declarations finds no unfinished proofs or
additional axioms beyond \texttt{propext}, \texttt{Classical.choice}, and
\texttt{Quot.sound}.

The formalization includes Definition~\ref{def:generalized-graph},
Lemma~\ref{lem:right-action-free}, Definition~\ref{def:biset-product-system},
Proposition~\ref{prop:multiplication-balanced}, and both constructions in
Theorem~\ref{thm:wfp-biset-system}.  It also verifies the categorical
assertions in Example~\ref{ex:heisenberg}: associativity, weak factorization,
left and right cancellation, the invertible subgroupoid, and failure of the
$R$-condition for every atomic transversal.

For Theorem~\ref{thm:r-condition-splitting}, Lean proves that atoms are the
arrows of standard-generator degree, that every arrow has a normal form
$\lambda g$, that the $R$-condition is equivalent to strict splitting, and
that a strict splitting gives unique factorization inside $\Lambda$.  The
normal-form multiplication identity is also proved.  The development does
not define the action and restriction maps, verify the Zappa--Sz\'ep axioms,
or construct the size-preserving isomorphism
$\Ccat\cong\Lambda\bowtie\G$ asserted in the last sentence of the theorem.

Several formalized lemmas supply parts of later analytic arguments.  These
are the uniqueness of the transversal decomposition $c=tg$ used in
Proposition~\ref{prop:coordinate-module}; the minimal-common-extension degree
criterion in Lemma~\ref{lem:mce-degree}; the fact that equal-degree arrows
with a common extension lie in the same right $\G$-orbit; and the
common-extension and divisibility statements used in the proof of
Theorem~\ref{thm:fowler-spielberg-boundary}.  The implication from
orbit-finiteness to finite alignment in Lemma~\ref{lem:mce-degree} is not
formalized.

The current development proves only one round trip associated with
Corollary~\ref{cor:categorical-isomorphism}: a generalized higher-rank graph
is equivalent to the category reconstructed from its homogeneous fibers.  It
does not define morphisms between generalized higher-rank graphs or between
biset product systems, and it does not prove the reverse round trip starting
from a biset product system.  The categorical isomorphism asserted in the
corollary is therefore not yet formalized.

The operator-algebraic layer is outside the present Lean development: Hilbert
$C^*$-modules, correspondences, groupoid $C^*$-algebras, Nica--Toeplitz and
Cuntz--Pimsner algebras, and Spielberg's groupoids are not defined.  Hence
Proposition~\ref{prop:coordinate-module} is checked only through its
combinatorial decomposition.  Propositions~\ref{prop:left-action},
\ref{prop:degree-one-coherence}, \ref{prop:representation-relations},
\ref{prop:heisenberg-product-system}, and
\ref{prop:finite-alignment-compacts}; Theorems~\ref{thm:linearization},
\ref{thm:strict-correspondence-comparison}, \ref{thm:nica-spielberg}, and
\ref{thm:fowler-spielberg-boundary}; Corollaries~\ref{cor:compact-alignment}
and \ref{cor:cnp-spielberg-boundary}; and
Lemma~\ref{lem:compact-left-action-formula} have not been formalized as
stated.  Section~\ref{sec:further-questions} is also outside the development.

\section*{Acknowledgments}
This work was initiated at the Banff International Research Station (BIRS)
during the workshop
\href{https://www.birs.ca/events/2023/5-day-workshops/23w5033}
{\emph{Joint Spectra and related Topics in Complex Dynamics and Representation
Theory}}, May 21--26, 2023.  The authors thank BIRS and the workshop organizers
for their hospitality.

\providecommand{\bysame}{\leavevmode\hbox to3em{\hrulefill}\thinspace}
\providecommand{\MR}{\relax\ifhmode\unskip\space\fi MR }
\providecommand{\MRhref}[2]{\href{http://www.ams.org/mathscinet-getitem?mr=#1}{#2}
}
\providecommand{\href}[2]{#2}

\end{document}